\theoremstyle{plain}
\newtheorem{thm}{Theorem}[subsection]
\newtheorem{cor}[thm]{Corollary}
\newtheorem{lemma}[thm]{Lemma}
\newtheorem{proposition}[thm]{Proposition}
\newtheorem{rmk}[thm]{Remark}
\newtheorem{defs}[thm]{Definition}
\theoremstyle{remark}
\def\dated#1{\def\thedate{#1}}%
 \newdimen\xydashw@@
\newdimen\high%
\newdimen\ul%
\newdimen\wdth%
\def\ratchet#1#2{\ifnum#1<#2\global #1=#2\fi}%
\def\ifnextchar#1#2#3{\let\@tempe%
#1\def\@tempa{#2}\def\@tempb{#3}\futurelet%
    \@tempc\@ifnch}%
\def\@ifnch{\ifx \@tempc \@sptoken \let\@tempd\@xifnch%
      \else \ifx \@tempc \@tempe\let\@tempd\@tempa\else\let\@tempd\@tempb\fi%
      \fi \@tempd}%
\def\:{\let\@sptoken= } \:  
\def\:{\@xifnch} \expandafter\def\: {\futurelet\@tempc\@ifnch}%
\let\ifnextchar\@ifnextchar%
\newdimen\axis \axis=\fontdimen22\textfont2%
\def\scalefactor#1{\ul=#1\ul \X@xbase=#1\X@xbase \Y@ybase=#1\Y@ybase}%
\def\fontscale#1{%
\if#1h\relax%
\font\xydashfont=xydash10 scaled \magstephalf%
\font\xyatipfont=xyatip10 scaled \magstephalf%
\font\xybtipfont=xybtip10 scaled \magstephalf%
\font\xybsqlfont=xybsql10 scaled \magstephalf%
\font\xycircfont=xycirc10 scaled \magstephalf%
\else%
\font\xydashfont=xydash10 scaled \magstep#1%
\font\xyatipfont=xyatip10 scaled \magstep#1%
\font\xybtipfont=xybtip10 scaled \magstep#1%
\font\xybsqlfont=xybsql10 scaled \magstep#1%
\font\xycircfont=xycirc10 scaled \magstep#1%
\fi}%
\def\bfig{\vcenter\bgroup\xy}%
\def\efig{\endxy\egroup}%
\def\car#1#2\nil{#1}%
\def\morphism{\ifnextchar({\morphismp}{\morphismp(0,0)}}%
\def\morphismp(#1){\ifnextchar|{\morphismpp(#1)}{\morphismpp(#1)|a|}}%
\def\morphismpp(#1)|#2|{\ifnextchar/{\morphismppp(#1)|#2|}%
    {\morphismppp(#1)|#2|/>/}}%
\def\morphismppp(#1)|#2|/#3/{%
    \ifnextchar<{\morphismpppp(#1)|#2|/#3/}%
    {\morphismpppp(#1)|#2|/#3/<\default,0>}}%
\def\morphismpppp(#1,#2)|#3|/#4/<#5,#6>[#7`#8;#9]{%
\xend#1\advance \xend by #5%
\yend#2\advance \yend by #6%
\domorphism(#1,#2)|#3|/#4/<#5,#6>[{#7}`{#8};{#9}]}%
\def\domorphism(#1,#2)|#3|/#4/<#5,#6>[#7`#8;#9]{%
\def\next{\car#4.\nil}%
\if@\next\relax%
 \if#3l%
  \ifnum #6>0%
   \POS(#1,#2)*+!!<0ex,\axis>{#7}\ar#4^-{#9} (\xend,\yend)*+!!<0ex,\axis>{#8}%
  \else%
   \POS(#1,#2)*+!!<0ex,\axis>{#7}\ar#4_-{#9} (\xend,\yend)*+!!<0ex,\axis>{#8}%
  \fi%
 \else \if#3m%
    \setbox0\hbox{$#9$}%
   \ifdim \wd0=0pt%
     \POS(#1,#2)*+!!<0ex,\axis>{#7}\ar#4 (\xend,\yend)*+!!<0ex,\axis>{#8}%
   \else%
     \POS(#1,#2)*+!!<0ex,\axis>{#7}\ar#4|-*+<1pt,4pt>{\labelstyle#9}%
       (\xend,\yend)*+!!<0ex,\axis>{#8}%
   \fi%
 \else \if#3r%
  \ifnum #6<0%
   \POS(#1,#2)*+!!<0ex,\axis>{#7}\ar#4^-{#9} (\xend,\yend)*+!!<0ex,\axis>{#8}%
  \else%
   \POS(#1,#2)*+!!<0ex,\axis>{#7}\ar#4_-{#9} (\xend,\yend)*+!!<0ex,\axis>{#8}%
  \fi%
 \else \if#3a%
  \ifnum #5>0%
   \POS(#1,#2)*+!!<0ex,\axis>{#7}\ar#4^-{#9} (\xend,\yend)*+!!<0ex,\axis>{#8}%
  \else%
   \POS(#1,#2)*+!!<0ex,\axis>{#7}\ar#4_-{#9} (\xend,\yend)*+!!<0ex,\axis>{#8}%
  \fi%
 \else \if#3b%
  \ifnum #5<0%
   \POS(#1,#2)*+!!<0ex,\axis>{#7}\ar#4^-{#9} (\xend,\yend)*+!!<0ex,\axis>{#8}%
  \else%
   \POS(#1,#2)*+!!<0ex,\axis>{#7}\ar#4_-{#9} (\xend,\yend)*+!!<0ex,\axis>{#8}%
  \fi%
 \else%
   \POS(#1,#2)*+!!<0ex,\axis>{#7}\ar#4 (\xend,\yend)*+!!<0ex,\axis>{#8}%
 \fi\fi\fi\fi\fi%
\else%
 \if#3l%
  \ifnum #6>0%
   \POS(#1,#2)*+!!<0ex,\axis>{#7}\ar@{#4}^-{#9} (\xend,\yend)*+!!<0ex,\axis>{#8}%
  \else%
   \POS(#1,#2)*+!!<0ex,\axis>{#7}\ar@{#4}_-{#9} (\xend,\yend)*+!!<0ex,\axis>{#8}%
  \fi%
 \else \if#3m%
    \setbox0\hbox{$#9$}%
   \ifdim \wd0=0pt%
     \POS(#1,#2)*+!!<0ex,\axis>{#7}\ar@{#4} (\xend,\yend)*+!!<0ex,\axis>{#8}%
   \else%
     \POS(#1,#2)*+!!<0ex,\axis>{#7}\ar@{#4}|-*+<1pt,4pt>{\labelstyle#9}%
         (\xend,\yend)*+!!<0ex,\axis>{#8}%
   \fi%
 \else \if#3r%
  \ifnum #6<0%
   \POS(#1,#2)*+!!<0ex,\axis>{#7}\ar@{#4}^-{#9} (\xend,\yend)*+!!<0ex,\axis>{#8}%
  \else%
   \POS(#1,#2)*+!!<0ex,\axis>{#7}\ar@{#4}_-{#9} (\xend,\yend)*+!!<0ex,\axis>{#8}%
  \fi%
 \else \if#3a%
  \ifnum #5>0%
   \POS(#1,#2)*+!!<0ex,\axis>{#7}\ar@{#4}^-{#9} (\xend,\yend)*+!!<0ex,\axis>{#8}%
  \else%
   \POS(#1,#2)*+!!<0ex,\axis>{#7}\ar@{#4}_-{#9} (\xend,\yend)*+!!<0ex,\axis>{#8}%
  \fi%
 \else \if#3b%
  \ifnum #5<0%
   \POS(#1,#2)*+!!<0ex,\axis>{#7}\ar@{#4}^-{#9} (\xend,\yend)*+!!<0ex,\axis>{#8}%
  \else%
   \POS(#1,#2)*+!!<0ex,\axis>{#7}\ar@{#4}_-{#9} (\xend,\yend)*+!!<0ex,\axis>{#8}%
  \fi%
 \else%
   \POS(#1,#2)*+!!<0ex,\axis>{#7}\ar@{#4} (\xend,\yend)*+!!<0ex,\axis>{#8}%
 \fi\fi\fi\fi\fi%
\fi\ignorespaces}%
\def\vect(#1,#2)/#3/<#4,#5>{%
 \xend#1 \yend#2 \advance\xend by #4 \advance\yend by #5%
     \POS(#1,#2)\ar#3 (\xend,\yend)}%
\def\squarepppp(#1,#2)|#3|/#4`#5`#6`#7/<#8>[#9]{%
\xpos#1\ypos#2%
\def\next|##1##2##3##4|{%
 \def\xa{##1}\def\xb{##2}\def\xc{##3}\def\xd{##4}\ignorespaces}%
\next|#3|%
\def\next<##1,##2>{\deltax=##1\deltay=##2\ignorespaces}%
\next<#8>%
\def\next[##1`##2`##3`##4;##5`##6`##7`##8]{%
    \def\nodea{##1}\def\nodeb{##2}\def\nodec{##3}\def\noded{##4}%
    \def\labela{##5}\def\labelb{##6}\def\labelc{##7}\def\labeld{##8}\ignorespaces}%
\next[#9]%
\morphism(\xpos,\ypos)|\xd|/{#7}/<\deltax,0>[\nodec`\noded;\labeld]%
\advance \ypos by \deltay%
\morphism(\xpos,\ypos)|\xb|/{#5}/<0,-\deltay>[\nodea`\nodec;\labelb]%
\morphism(\xpos,\ypos)|\xa|/{#4}/<\deltax,0>[\nodea`\nodeb;\labela]%
 \advance \xpos by \deltax%
\morphism(\xpos,\ypos)|\xc|/{#6}/<0,-\deltay>[\nodeb`\noded;\labelc]%
\ignorespaces}%
\def\square{\ifnextchar({\squarep}{\squarep(0,0)}}%
\def\squarep(#1){\ifnextchar|{\squarepp(#1)}{\squarepp(#1)|alrb|}}%
\def\squarepp(#1)|#2|{\ifnextchar/{\squareppp(#1)|#2|}%
    {\squareppp(#1)|#2|/>`>`>`>/}}%
\def\squareppp(#1)|#2|/#3`#4`#5`#6/{%
    \ifnextchar<{\squarepppp(#1)|#2|/#3`#4`#5`#6/}%
    {\squarepppp(#1)|#2|/#3`#4`#5`#6/<\default,\default>}}%
\def\diamondpppp(#1,#2)|#3|/#4`#5`#6`#7/<#8>[#9]{%
\xpos#1\ypos#2%
\def\next|##1##2##3##4|{%
 \def\xa{##1}\def\xb{##2}\def\xc{##3}\def\xd{##4}\ignorespaces}%
\next|#3|%
\def\next<##1,##2>{\deltax=##1\deltay=##2\ignorespaces}%
\next<#8>%
\def\next[##1`##2`##3`##4;##5`##6`##7`##8]{%
    \def\nodea{##1}\def\nodeb{##2}\def\nodec{##3}\def\noded{##4}%
    \def\labela{##5}\def\labelb{##6}\def\labelc{##7}%
\def\labeld{##8}\ignorespaces}%
\next[#9]%
\advance\ypos\deltay
\morphism(\xpos,\ypos)|\xc|/{#6}/<\deltax,-\deltay>[\nodeb`\noded;\labelc]%
\advance\xpos \deltax
\advance\xpos \deltax
\morphism(\xpos,\ypos)|\xd|/{#7}/<-\deltax,-\deltay>[\nodec`\noded;\labeld]%
\advance\ypos\deltay \advance\xpos -\deltax
\morphism(\xpos,\ypos)|\xa|/{#4}/<-\deltax,-\deltay>[\nodea`\nodeb;\labela]%
\morphism(\xpos,\ypos)|\xb|/{#5}/<\deltax,-\deltay>[\nodea`\nodec;\labelb]%
}
\def\diamondp(#1){\ifnextchar|{\diamondpp(#1)}{\diamondpp(#1)|lrlr|}}%
\def\diamondpp(#1)|#2|{\ifnextchar/{\diamondppp(#1)|#2|}%
    {\diamondppp(#1)|#2|/>`>`>`>/}}%
\def\diamondppp(#1)|#2|/#3`#4`#5`#6/{%
    \ifnextchar<{\diamondpppp(#1)|#2|/#3`#4`#5`#6/}%
    {\diamondpppp(#1)|#2|/#3`#4`#5`#6/<400,400>}}%
\def\ptrianglepppp(#1,#2)|#3|/#4`#5`#6/<#7>[#8]{%
\xpos#1\ypos#2%
\def\next|##1##2##3|{\def\xa{##1}\def\xb{##2}\def\xc{##3}}%
\next|#3|%
\def\next<##1,##2>{\deltax=##1\deltay=##2\ignorespaces}%
\next<#7>%
\def\next[##1`##2`##3;##4`##5`##6]{%
    \def\nodea{##1}\def\nodeb{##2}\def\nodec{##3}%
    \def\labela{##4}\def\labelb{##5}\def\labelc{##6}}%
\next[#8]%
\advance\ypos by \deltay%
\morphism(\xpos,\ypos)|\xa|/{#4}/<\deltax,0>[\nodea`\nodeb;\labela]%
\morphism(\xpos,\ypos)|\xb|/{#5}/<0,-\deltay>[\nodea`\nodec;\labelb]%
\advance\xpos by \deltax%
\morphism(\xpos,\ypos)|\xc|/{#6}/<-\deltax,-\deltay>[\nodeb`\nodec;\labelc]%
\ignorespaces}%
\def\qtrianglepppp(#1,#2)|#3|/#4`#5`#6/<#7>[#8]{%
\xpos#1\ypos#2%
\def\next|##1##2##3|{\def\xa{##1}\def\xb{##2}\def\xc{##3}}%
\next|#3|%
\def\next<##1,##2>{\deltax=##1\deltay=##2\ignorespaces}%
\next<#7>%
\def\next[##1`##2`##3;##4`##5`##6]{%
    \def\nodea{##1}\def\nodeb{##2}\def\nodec{##3}%
    \def\labela{##4}\def\labelb{##5}\def\labelc{##6}}%
\next[#8]%
\advance\ypos by \deltay%
\morphism(\xpos,\ypos)|\xa|/{#4}/<\deltax,0>[\nodea`\nodeb;\labela]%
\morphism(\xpos,\ypos)|\xb|/{#5}/<\deltax,-\deltay>[\nodea`\nodec;\labelb]%
\advance\xpos by \deltax%
\morphism(\xpos,\ypos)|\xc|/{#6}/<0,-\deltay>[\nodeb`\nodec;\labelc]%
\ignorespaces}%
\def\dtrianglepppp(#1,#2)|#3|/#4`#5`#6/<#7>[#8]{%
\xpos#1\ypos#2%
\def\next|##1##2##3|{\def\xa{##1}\def\xb{##2}\def\xc{##3}}%
\next|#3|%
\def\next<##1,##2>{\deltax=##1\deltay=##2\ignorespaces}%
\next<#7>%
\def\next[##1`##2`##3;##4`##5`##6]{%
    \def\nodea{##1}\def\nodeb{##2}\def\nodec{##3}%
    \def\labela{##4}\def\labelb{##5}\def\labelc{##6}}%
\next[#8]%
\morphism(\xpos,\ypos)|\xc|/{#6}/<\deltax,0>[\nodeb`\nodec;\labelc]%
\advance\ypos by \deltay\advance \xpos by \deltax%
\morphism(\xpos,\ypos)|\xa|/{#4}/<-\deltax,-\deltay>[\nodea`\nodeb;\labela]%
\morphism(\xpos,\ypos)|\xb|/{#5}/<0,-\deltay>[\nodea`\nodec;\labelb]%
\ignorespaces}%
\def\btrianglepppp(#1,#2)|#3|/#4`#5`#6/<#7>[#8]{%
\xpos#1\ypos#2%
\def\next|##1##2##3|{\def\xa{##1}\def\xb{##2}\def\xc{##3}}%
\next|#3|%
\def\next<##1,##2>{\deltax=##1\deltay=##2\ignorespaces}%
\next<#7>%
\def\next[##1`##2`##3;##4`##5`##6]{%
    \def\nodea{##1}\def\nodeb{##2}\def\nodec{##3}%
    \def\labela{##4}\def\labelb{##5}\def\labelc{##6}}%
\next[#8]%
\morphism(\xpos,\ypos)|\xc|/{#6}/<\deltax,0>[\nodeb`\nodec;\labelc]%
\advance\ypos by \deltay%
\morphism(\xpos,\ypos)|\xa|/{#4}/<0,-\deltay>[\nodea`\nodeb;\labela]%
\morphism(\xpos,\ypos)|\xb|/{#5}/<\deltax,-\deltay>[\nodea`\nodec;\labelb]%
\ignorespaces}%
\def\Atrianglepppp(#1,#2)|#3|/#4`#5`#6/<#7>[#8]{%
\xpos#1\ypos#2%
\def\next|##1##2##3|{\def\xa{##1}\def\xb{##2}\def\xc{##3}}%
\next|#3|%
\def\next<##1,##2>{\deltax=##1\deltay=##2\ignorespaces}%
\next<#7>%
\def\next[##1`##2`##3;##4`##5`##6]{%
    \def\nodea{##1}\def\nodeb{##2}\def\nodec{##3}%
    \def\labela{##4}\def\labelb{##5}\def\labelc{##6}}%
\next[#8]%
\multiply\deltax by 2%
\morphism(\xpos,\ypos)|\xc|/{#6}/<\deltax,0>[\nodeb`\nodec;\labelc]%
\divide\deltax by 2%
\advance\ypos by \deltay\advance\xpos by \deltax%
\morphism(\xpos,\ypos)|\xa|/{#4}/<-\deltax,-\deltay>[\nodea`\nodeb;\labela]%
\morphism(\xpos,\ypos)|\xb|/{#5}/<\deltax,-\deltay>[\nodea`\nodec;\labelb]%
\ignorespaces}%
\def\Vtrianglepppp(#1,#2)|#3|/#4`#5`#6/<#7>[#8]{%
\xpos#1\ypos#2%
\def\next|##1##2##3|{\def\xa{##1}\def\xb{##2}\def\xc{##3}}%
\next|#3|%
\def\next<##1,##2>{\deltax=##1\deltay=##2\ignorespaces}%
\next<#7>%
\def\next[##1`##2`##3;##4`##5`##6]{%
    \def\nodea{##1}\def\nodeb{##2}\def\nodec{##3}%
    \def\labela{##4}\def\labelb{##5}\def\labelc{##6}}%
\next[#8]%
\advance\ypos by \deltay%
\morphism(\xpos,\ypos)|\xb|/{#5}/<\deltax,-\deltay>[\nodea`\nodec;\labelb]%
\multiply\deltax by 2%
\morphism(\xpos,\ypos)|\xa|/{#4}/<\deltax,0>[\nodea`\nodeb;\labela]%
\advance\xpos by \deltax \divide \deltax by 2%
\morphism(\xpos,\ypos)|\xc|/{#6}/<-\deltax,-\deltay>[\nodeb`\nodec;\labelc]%
\ignorespaces}%
\def\Ctrianglepppp(#1,#2)|#3|/#4`#5`#6/<#7>[#8]{%
\xpos#1\ypos#2%
\def\next|##1##2##3|{\def\xa{##1}\def\xb{##2}\def\xc{##3}}%
\next|#3|%
\def\next<##1,##2>{\deltax=##1\deltay=##2\ignorespaces}%
\next<#7>%
\def\next[##1`##2`##3;##4`##5`##6]{%
    \def\nodea{##1}\def\nodeb{##2}\def\nodec{##3}%
    \def\labela{##4}\def\labelb{##5}\def\labelc{##6}}%
\next[#8]%
\advance \ypos by \deltay%
\morphism(\xpos,\ypos)|\xc|/{#6}/<\deltax,-\deltay>[\nodeb`\nodec;\labelc]%
\advance\ypos by \deltay \advance \xpos by \deltax%
\morphism(\xpos,\ypos)|\xa|/{#4}/<-\deltax,-\deltay>[\nodea`\nodeb;\labela]%
\multiply\deltay by 2%
\morphism(\xpos,\ypos)|\xb|/{#5}/<0,-\deltay>[\nodea`\nodec;\labelb]%
\ignorespaces}%
\def\Dtrianglepppp(#1,#2)|#3|/#4`#5`#6/<#7>[#8]{%
\xpos#1\ypos#2%
\def\next|##1##2##3|{\def\xa{##1}\def\xb{##2}\def\xc{##3}}%
\next|#3|%
\def\next<##1,##2>{\deltax=##1\deltay=##2\ignorespaces}%
\next<#7>%
\def\next[##1`##2`##3;##4`##5`##6]{%
    \def\nodea{##1}\def\nodeb{##2}\def\nodec{##3}%
    \def\labela{##4}\def\labelb{##5}\def\labelc{##6}}%
\next[#8]%
\advance\xpos by \deltax \advance\ypos by \deltay%
\morphism(\xpos,\ypos)|\xc|/{#6}/<-\deltax,-\deltay>[\nodeb`\nodec;\labelc]%
\advance\xpos by -\deltax \advance\ypos by \deltay%
\morphism(\xpos,\ypos)|\xb|/{#5}/<\deltax,-\deltay>[\nodea`\nodeb;\labelb]%
\multiply \deltay by 2%
\morphism(\xpos,\ypos)|\xa|/{#4}/<0,-\deltay>[\nodea`\nodec;\labela]%
\ignorespaces}%
\def\ptrianglep(#1){\ifnextchar|{\ptrianglepp(#1)}{\ptrianglepp(#1)|alr|}}%
\def\ptrianglepp(#1)|#2|{\ifnextchar/{\ptriangleppp(#1)|#2|}%
    {\ptriangleppp(#1)|#2|/>`>`>/}}%
\def\ptriangleppp(#1)|#2|/#3`#4`#5/{%
    \ifnextchar<{\ptrianglepppp(#1)|#2|/#3`#4`#5/}%
    {\ptrianglepppp(#1)|#2|/#3`#4`#5/<\default,\default>}}%
\def\qtrianglep(#1){\ifnextchar|{\qtrianglepp(#1)}{\qtrianglepp(#1)|alr|}}%
\def\qtrianglepp(#1)|#2|{\ifnextchar/{\qtriangleppp(#1)|#2|}%
    {\qtriangleppp(#1)|#2|/>`>`>/}}%
\def\qtriangleppp(#1)|#2|/#3`#4`#5/{%
    \ifnextchar<{\qtrianglepppp(#1)|#2|/#3`#4`#5/}%
    {\qtrianglepppp(#1)|#2|/#3`#4`#5/<\default,\default>}}%
\def\dtrianglep(#1){\ifnextchar|{\dtrianglepp(#1)}{\dtrianglepp(#1)|lrb|}}%
\def\dtrianglepp(#1)|#2|{\ifnextchar/{\dtriangleppp(#1)|#2|}%
    {\dtriangleppp(#1)|#2|/>`>`>/}}%
\def\dtriangleppp(#1)|#2|/#3`#4`#5/{%
    \ifnextchar<{\dtrianglepppp(#1)|#2|/#3`#4`#5/}%
    {\dtrianglepppp(#1)|#2|/#3`#4`#5/<\default,\default>}}%
\def\btrianglep(#1){\ifnextchar|{\btrianglepp(#1)}{\btrianglepp(#1)|lrb|}}%
\def\btrianglepp(#1)|#2|{\ifnextchar/{\btriangleppp(#1)|#2|}%
    {\btriangleppp(#1)|#2|/>`>`>/}}%
\def\btriangleppp(#1)|#2|/#3`#4`#5/{%
    \ifnextchar<{\btrianglepppp(#1)|#2|/#3`#4`#5/}%
    {\btrianglepppp(#1)|#2|/#3`#4`#5/<\default,\default>}}%
\def\Atrianglep(#1){\ifnextchar|{\Atrianglepp(#1)}{\Atrianglepp(#1)|lrb|}}%
\def\Atrianglepp(#1)|#2|{\ifnextchar/{\Atriangleppp(#1)|#2|}%
    {\Atriangleppp(#1)|#2|/>`>`>/}}%
\def\Atriangleppp(#1)|#2|/#3`#4`#5/{%
    \ifnextchar<{\Atrianglepppp(#1)|#2|/#3`#4`#5/}%
    {\Atrianglepppp(#1)|#2|/#3`#4`#5/<\default,\default>}}%
\def\Vtrianglep(#1){\ifnextchar|{\Vtrianglepp(#1)}{\Vtrianglepp(#1)|alb|}}%
\def\Vtrianglepp(#1)|#2|{\ifnextchar/{\Vtriangleppp(#1)|#2|}%
    {\Vtriangleppp(#1)|#2|/>`>`>/}}%
\def\Vtriangleppp(#1)|#2|/#3`#4`#5/{%
    \ifnextchar<{\Vtrianglepppp(#1)|#2|/#3`#4`#5/}%
    {\Vtrianglepppp(#1)|#2|/#3`#4`#5/<\default,\default>}}%
\def\Ctrianglep(#1){\ifnextchar|{\Ctrianglepp(#1)}{\Ctrianglepp(#1)|arb|}}%
\def\Ctrianglepp(#1)|#2|{\ifnextchar/{\Ctriangleppp(#1)|#2|}%
    {\Ctriangleppp(#1)|#2|/>`>`>/}}%
\def\Ctriangleppp(#1)|#2|/#3`#4`#5/{%
    \ifnextchar<{\Ctrianglepppp(#1)|#2|/#3`#4`#5/}%
    {\Ctrianglepppp(#1)|#2|/#3`#4`#5/<\default,\default>}}%
\def\Dtrianglep(#1){\ifnextchar|{\Dtrianglepp(#1)}{\Dtrianglepp(#1)|lab|}}%
\def\Dtrianglepp(#1)|#2|{\ifnextchar/{\Dtriangleppp(#1)|#2|}%
    {\Dtriangleppp(#1)|#2|/>`>`>/}}%
\def\Dtriangleppp(#1)|#2|/#3`#4`#5/{%
    \ifnextchar<{\Dtrianglepppp(#1)|#2|/#3`#4`#5/}%
    {\Dtrianglepppp(#1)|#2|/#3`#4`#5/<\default,\default>}}%
\def\Atrianglepairpppp(#1)|#2|/#3`#4`#5`#6`#7/<#8>[#9]{%
\def\next(##1,##2){\xpos##1\ypos##2}%
\next(#1)%
\def\next|##1##2##3##4##5|{\def\xa{##1}\def\xb{##2}%
\def\xc{##3}\def\xd{##4}\def\xe{##5}}%
\next|#2|%
\def\next<##1,##2>{\deltax=##1\deltay=##2\ignorespaces}%
\next<#8>%
\def\next[##1`##2`##3`##4;##5`##6`##7`##8`##9]{%
 \def\nodea{##1}\def\nodeb{##2}\def\nodec{##3}\def\noded{##4}%
 \def\labela{##5}\def\labelb{##6}\def\labelc{##7}\def\labeld{##8}\def\labele{##9}}%
\next[#9]%
\morphism(\xpos,\ypos)|\xd|/{#6}/<\deltax,0>[\nodeb`\nodec;\labeld]%
\advance\xpos by \deltax%
\morphism(\xpos,\ypos)|\xe|/{#7}/<\deltax,0>[\nodec`\noded;\labele]%
\advance\ypos by \deltay%
\morphism(\xpos,\ypos)|\xa|/{#3}/<-\deltax,-\deltay>[\nodea`\nodeb;\labela]%
\morphism(\xpos,\ypos)|\xb|/{#4}/<0,-\deltay>[\nodea`\nodec;\labelb]%
\morphism(\xpos,\ypos)|\xc|/{#5}/<\deltax,-\deltay>[\nodea`\noded;\labelc]%
\ignorespaces}%
\def\Vtrianglepairpppp(#1)|#2|/#3`#4`#5`#6`#7/<#8>[#9]{%
\def\next(##1,##2){\xpos##1\ypos##2}%
\next(#1)%
\def\next|##1##2##3##4##5|{\def\xa{##1}\def\xb{##2}%
\def\xc{##3}\def\xd{##4}\def\xe{##5}}%
\next|#2|%
\def\next<##1,##2>{\deltax=##1\deltay=##2\ignorespaces}%
\next<#8>%
\def\next[##1`##2`##3`##4;##5`##6`##7`##8`##9]{%
 \def\nodea{##1}\def\nodeb{##2}\def\nodec{##3}\def\noded{##4}%
 \def\labela{##5}\def\labelb{##6}\def\labelc{##7}\def\labeld{##8}\def\labele{##9}}%
\next[#9]%
\advance\ypos by \deltay%
\morphism(\xpos,\ypos)|\xa|/{#3}/<\deltax,0>[\nodea`\nodeb;\labela]%
\morphism(\xpos,\ypos)|\xc|/{#5}/<\deltax,-\deltay>[\nodea`\noded;\labelc]%
\advance\xpos by \deltax%
\morphism(\xpos,\ypos)|\xb|/{#4}/<\deltax,0>[\nodeb`\nodec;\labelb]%
\morphism(\xpos,\ypos)|\xd|/{#6}/<0,-\deltay>[\nodeb`\noded;\labeld]%
\advance\xpos by \deltax%
\morphism(\xpos,\ypos)|\xe|/{#7}/<-\deltax,-\deltay>[\nodec`\noded;\labele]%
\ignorespaces}%
\def\Ctrianglepairpppp(#1)|#2|/#3`#4`#5`#6`#7/<#8>[#9]{%
\def\next(##1,##2){\xpos##1\ypos##2}%
\next(#1)%
\def\next|##1##2##3##4##5|{\def\xa{##1}\def\xb{##2}%
\def\xc{##3}\def\xd{##4}\def\xe{##5}}%
\next|#2|%
\def\next<##1,##2>{\deltax=##1\deltay=##2\ignorespaces}%
\next<#8>%
\def\next[##1`##2`##3`##4;##5`##6`##7`##8`##9]{%
 \def\nodea{##1}\def\nodeb{##2}\def\nodec{##3}\def\noded{##4}%
 \def\labela{##5}\def\labelb{##6}\def\labelc{##7}\def\labeld{##8}\def\labele{##9}}%
\next[#9]%
\advance\ypos by \deltay%
\morphism(\xpos,\ypos)|\xe|/{#7}/<0,-\deltay>[\nodec`\noded;\labele]%
\advance\xpos by -\deltax%
\morphism(\xpos,\ypos)|\xc|/{#5}/<\deltax,0>[\nodeb`\nodec;\labelc]%
\morphism(\xpos,\ypos)|\xd|/{#6}/<\deltax,-\deltay>[\nodeb`\noded;\labeld]%
\advance\ypos by \deltay%
\advance\xpos by \deltax%
\morphism(\xpos,\ypos)|\xa|/{#3}/<-\deltax,-\deltay>[\nodea`\nodeb;\labela]%
\morphism(\xpos,\ypos)|\xb|/{#4}/<0,-\deltay>[\nodea`\nodec;\labelb]%
\ignorespaces}%
\def\Dtrianglepairpppp(#1)|#2|/#3`#4`#5`#6`#7/<#8>[#9]{%
\def\next(##1,##2){\xpos##1\ypos##2}%
\next(#1)%
\def\next|##1##2##3##4##5|{\def\xa{##1}\def\xb{##2}%
\def\xc{##3}\def\xd{##4}\def\xe{##5}}%
\next|#2|%
\def\next<##1,##2>{\deltax=##1\deltay=##2\ignorespaces}%
\next<#8>%
\def\next[##1`##2`##3`##4;##5`##6`##7`##8`##9]{%
 \def\nodea{##1}\def\nodeb{##2}\def\nodec{##3}\def\noded{##4}%
 \def\labela{##5}\def\labelb{##6}\def\labelc{##7}\def\labeld{##8}\def\labele{##9}}%
\next[#9]%
\advance\ypos by \deltay%
\morphism(\xpos,\ypos)|\xc|/{#5}/<\deltax,0>[\nodeb`\nodec;\labelc]%
\morphism(\xpos,\ypos)|\xd|/{#6}/<0,-\deltay>[\nodeb`\noded;\labeld]%
\advance\ypos by \deltay%
\morphism(\xpos,\ypos)|\xa|/{#3}/<0,-\deltay>[\nodea`\nodeb;\labela]%
\morphism(\xpos,\ypos)|\xb|/{#4}/<\deltax,-\deltay>[\nodea`\nodec;\labelb]%
\advance\ypos by -\deltay%
\advance\xpos by \deltax%
\morphism(\xpos,\ypos)|\xe|/{#7}/<-\deltax,-\deltay>[\nodec`\noded;\labele]%
\ignorespaces}%
\def\Atrianglepairp(#1){\ifnextchar|{\Atrianglepairpp(#1)}%
{\Atrianglepairpp(#1)|lmrbb|}}%
\def\Atrianglepairpp(#1)|#2|{\ifnextchar/{\Atrianglepairppp(#1)|#2|}%
    {\Atrianglepairppp(#1)|#2|/>`>`>`>`>/}}%
\def\Atrianglepairppp(#1)|#2|/#3`#4`#5`#6`#7/{%
    \ifnextchar<{\Atrianglepairpppp(#1)|#2|/#3`#4`#5`#6`#7/}%
    {\Atrianglepairpppp(#1)|#2|/#3`#4`#5`#6`#7/<\default,\default>}}%
\def\Vtrianglepairp(#1){\ifnextchar|{\Vtrianglepairpp(#1)}%
{\Vtrianglepairpp(#1)|aalmr|}}%
\def\Vtrianglepairpp(#1)|#2|{\ifnextchar/{\Vtrianglepairppp(#1)|#2|}%
    {\Vtrianglepairppp(#1)|#2|/>`>`>`>`>/}}%
\def\Vtrianglepairppp(#1)|#2|/#3`#4`#5`#6`#7/{%
    \ifnextchar<{\Vtrianglepairpppp(#1)|#2|/#3`#4`#5`#6`#7/}%
    {\Vtrianglepairpppp(#1)|#2|/#3`#4`#5`#6`#7/<\default,\default>}}%
\def\Ctrianglepairp(#1){\ifnextchar|{\Ctrianglepairpp(#1)}%
{\Ctrianglepairpp(#1)|lrmlr|}}%
\def\Ctrianglepairpp(#1)|#2|{\ifnextchar/{\Ctrianglepairppp(#1)|#2|}%
    {\Ctrianglepairppp(#1)|#2|/>`>`>`>`>/}}%
\def\Ctrianglepairppp(#1)|#2|/#3`#4`#5`#6`#7/{%
    \ifnextchar<{\Ctrianglepairpppp(#1)|#2|/#3`#4`#5`#6`#7/}%
    {\Ctrianglepairpppp(#1)|#2|/#3`#4`#5`#6`#7/<\default,\default>}}%
\def\Dtrianglepairp(#1){\ifnextchar|{\Dtrianglepairpp(#1)}%
{\Dtrianglepairpp(#1)|lrmlr|}}%
\def\Dtrianglepairpp(#1)|#2|{\ifnextchar/{\Dtrianglepairppp(#1)|#2|}%
    {\Dtrianglepairppp(#1)|#2|/>`>`>`>`>/}}%
\def\Dtrianglepairppp(#1)|#2|/#3`#4`#5`#6`#7/{%
    \ifnextchar<{\Dtrianglepairpppp(#1)|#2|/#3`#4`#5`#6`#7/}%
    {\Dtrianglepairpppp(#1)|#2|/#3`#4`#5`#6`#7/<\default,\default>}}%
\def\pplace[#1](#2,#3)[#4]{\POS(#2,#3)*+!!<0ex,\axis>!#1{#4}\ignorespaces}%
\def\cplace(#1,#2)[#3]{\POS(#1,#2)*+!!<0ex,\axis>{#3}\ignorespaces}%
\def\place{\ifnextchar[{\pplace}{\cplace}}%
\def\pullback#1]#2]{\square#1]\trident#2]\ignorespaces}%
\def\tridentppp|#1#2#3|/#4`#5`#6/<#7,#8>[#9]{%
\def\next[##1;##2`##3`##4]{\def\nodee{##1}\def\labele{##2}%
   \def\labelf{##3}\def\labelg{##4}}%
\next[#9]%
\advance \xpos by -\deltax%
\advance \xpos by -#7\advance \ypos by #8%
\advance\deltax by #7%
\morphism(\xpos,\ypos)|#1|/{#4}/<\deltax,-#8>[\nodee`\nodeb;\labele]%
\advance\deltax by -#7%
\morphism(\xpos,\ypos)|#2|/{#5}/<#7,-#8>[\nodee`\nodea;\labelf]%
\advance\deltay by #8%
\morphism(\xpos,\ypos)|#3|/{#6}/<#7,-\deltay>[\nodee`\nodec;\labelg]%
\ignorespaces}%
\def\trident{\ifnextchar|{\tridentp}{\tridentp|amb|}}%
\def\tridentp|#1|{\ifnextchar/{\tridentpp|#1|}{\tridentpp|#1|/{>}`{>}`{>}/}}%
\def\tridentpp|#1|/#2/{\ifnextchar<{\tridentppp|#1|/#2/}%
  {\tridentppp|#1|/#2/<500,500>}}%
\def\setmorphismwidth#1#2#3#4{%
 \setbox0=\hbox{$#1{\labelstyle#3#3}#2$}#4=\wd0%
 \divide #4 by 2 \divide #4 by \ul%
 \advance #4 by 350 \ratchet{#4}{500}}%
\def\setSquarewidth[#1`#2`#3`#4;#5`#6`#7`#8]{%
 \setmorphismwidth{#1}{#2}{#5}{\topw}%
 \setmorphismwidth{#3}{#4}{#8}{\botw}%
\ratchet{\topw}{\botw}}%
\def\Squarepppp(#1)|#2|/#3/<#4>[#5]{%
 \setSquarewidth[#5]%
 \squarepppp(#1)|#2|/#3/<\topw,#4>[#5]%
\ignorespaces}%
\def\Squarep(#1){\ifnextchar|{\Squarepp(#1)}{\Squarepp(#1)|alrb|}}%
\def\Squarepp(#1)|#2|{\ifnextchar/{\Squareppp(#1)|#2|}%
    {\Squareppp(#1)|#2|/>`>`>`>/}}%
\def\Squareppp(#1)|#2|/#3`#4`#5`#6/{%
    \ifnextchar<{\Squarepppp(#1)|#2|/#3`#4`#5`#6/}%
    {\Squarepppp(#1)|#2|/#3`#4`#5`#6/<\default>}}%
\def\hsquarespppp(#1,#2)|#3|/#4/<#5>[#6;#7]{%
\Xpos=#1\Ypos=#2%
\def\next|##1##2##3##4##5##6##7|{%
 \def\Xa{##1}\def\Xb{##2}\def\Xc{##3}\def\Xd{##4}%
 \def\Xe{##5}\def\Xf{##6}\def\Xg{##7}}%
\next|#3|%
\def\next<##1,##2,##3>{\deltaX=##1\deltaXprime=##2\deltaY=##3}%
\next<#5>%
\def\next[##1`##2`##3`##4`##5`##6]{%
 \def\Nodea{##1}\def\Nodeb{##2}\def\Nodec{##3}%
 \def\Noded{##4}\def\Nodee{##5}\def\Nodef{##6}}%
\next[#6]%
\def\next[##1`##2`##3`##4`##5`##6`##7]{%
 \def\Labela{##1}\def\Labelb{##2}\def\Labelc{##3}\def\Labeld{##4}%
 \def\Labele{##5}\def\Labelf{##6}\def\Labelg{##7}}%
\next[#7]%
\dohsquares/#4/}%
\def\dohsquares/#1`#2`#3`#4`#5`#6`#7/{%
\squarepppp(\Xpos,\Ypos)|\Xa\Xc\Xd\Xf|/#1`#3`#4`#6/<\deltaX,\deltaY>%
 [\Nodea`\Nodeb`\Noded`\Nodee;\Labela`\Labelc`\Labeld`\Labelf]%
 \advance \Xpos by \deltaX%
\squarepppp(\Xpos,\Ypos)|\Xb\Xd\Xe\Xg|/#2``#5`#7/<\deltaXprime,\deltaY>%
[\Nodeb`\Nodec`\Nodee`\Nodef;\Labelb``\Labele`\Labelg]%
\ignorespaces}%
\def\hsquaresp(#1){\ifnextchar|{\hsquarespp(#1)}{\hsquarespp%
(#1)|aalmrbb|}}%
\def\hsquarespp(#1)|#2|{\ifnextchar/{\hsquaresppp(#1)|#2|}%
    {\hsquaresppp(#1)|#2|/>`>`>`>`>`>`>/}}%
\def\hsquaresppp(#1)|#2|/#3/{%
    \ifnextchar<{\hsquarespppp(#1)|#2|/#3/}%
    {\hsquarespppp(#1)|#2|/#3/<\default,\default,\default>}}%
\def\hSquarespppp(#1,#2)|#3|/#4/<#5>[#6;#7]{%
\Xpos=#1\Ypos=#2%
\def\next|##1##2##3##4##5##6##7|{%
 \def\Xa{##1}\def\Xb{##2}\def\Xc{##3}\def\Xd{##4}%
 \def\Xe{##5}\def\Xf{##6}\def\Xg{##7}}%
\next|#3|%
\deltaY=#5%
\def\next[##1`##2`##3`##4`##5`##6]{%
 \def\Nodea{##1}\def\Nodeb{##2}\def\Nodec{##3}%
 \def\Noded{##4}\def\Nodee{##5}\def\Nodef{##6}}%
\next[#6]%
\def\next[##1`##2`##3`##4`##5`##6`##7]{%
 \def\Labela{##1}\def\Labelb{##2}\def\Labelc{##3}\def\Labeld{##4}%
 \def\Labele{##5}\def\Labelf{##6}\def\Labelg{##7}}%
\next[#7]%
\dohSquares/#4/}%
\def\dohSquares/#1`#2`#3`#4`#5`#6`#7/{%
\Squarepppp(\Xpos,\Ypos)|\Xa\Xc\Xd\Xf|/#1`#3`#4`#6/<\deltaY>%
 [\Nodea`\Nodeb`\Noded`\Nodee;\Labela`\Labelc`\Labeld`\Labelf]%
 \advance \Xpos by \topw%
\Squarepppp(\Xpos,\Ypos)|\Xb\Xd\Xe\Xg|/#2``#5`#7/<\deltaY>%
[\Nodeb`\Nodec`\Nodee`\Nodef;\Labelb``\Labele`\Labelg]%
\ignorespaces}%
\def\hSquaresp(#1){\ifnextchar|{\hSquarespp(#1)}{\hSquarespp%
(#1)|aalmrbb|}}%
\def\hSquarespp(#1)|#2|{\ifnextchar/{\hSquaresppp(#1)|#2|}%
    {\hSquaresppp(#1)|#2|/>`>`>`>`>`>`>/}}%
\def\hSquaresppp(#1)|#2|/#3/{%
    \ifnextchar<{\hSquarespppp(#1)|#2|/#3/}%
    {\hSquarespppp(#1)|#2|/#3/<\default>}}%
\def\vsquarespppp(#1,#2)|#3|/#4/<#5>[#6;#7]{%
\Xpos=#1\Ypos=#2%
\def\next|##1##2##3##4##5##6##7|{%
 \def\Xa{##1}\def\Xb{##2}\def\Xc{##3}\def\Xd{##4}%
 \def\Xe{##5}\def\Xf{##6}\def\Xg{##7}}%
\next|#3|%
\def\next<##1,##2,##3>{\deltaX=##1\deltaY=##2\deltaYprime=##3}%
\next<#5>%
\def\next[##1`##2`##3`##4`##5`##6]{%
 \def\Nodea{##1}\def\Nodeb{##2}\def\Nodec{##3}%
 \def\Noded{##4}\def\Nodee{##5}\def\Nodef{##6}}%
\next[#6]%
\def\next[##1`##2`##3`##4`##5`##6`##7]{%
 \def\Labela{##1}\def\Labelb{##2}\def\Labelc{##3}\def\Labeld{##4}%
 \def\Labele{##5}\def\Labelf{##6}\def\Labelg{##7}}%
\next[#7]%
\dovsquares/#4/}%
\def\dovsquares/#1`#2`#3`#4`#5`#6`#7/{%
\squarepppp(\Xpos,\Ypos)|\Xd\Xe\Xf\Xg|/`#5`#6`#7/<\deltaX,\deltaYprime>%
[\Nodec`\Noded`\Nodee`\Nodef;`\Labele`\Labelf`\Labelg]%
 \advance\Ypos by \deltaYprime%
\squarepppp(\Xpos,\Ypos)|\Xa\Xb\Xc\Xd|/#1`#2`#3`#4/<\deltaX,\deltaY>%
 [\Nodea`\Nodeb`\Nodec`\Noded;\Labela`\Labelb`\Labelc`\Labeld]%
\ignorespaces}%
\def\vsquaresp(#1){\ifnextchar|{\vsquarespp(#1)}{\vsquarespp%
(#1)|aalmrbb|}}%
\def\vsquarespp(#1)|#2|{\ifnextchar/{\vsquaresppp(#1)|#2|}%
    {\vsquaresppp(#1)|#2|/>`>`>`>`>`>`>/}}%
\def\vsquaresppp(#1)|#2|/#3/{%
    \ifnextchar<{\vsquarespppp(#1)|#2|/#3/}%
    {\vsquarespppp(#1)|#2|/#3/<\default,\default,\default>}}%
\def\vSquarespppp(#1,#2)|#3|/#4/<#5,#6>[#7;#8]{%
\Xpos=#1\Ypos=#2%
\def\next|##1##2##3##4##5##6##7|{%
 \def\Xa{##1}\def\Xb{##2}\def\Xc{##3}\def\Xd{##4}%
 \def\Xe{##5}\def\Xf{##6}\def\Xg{##7}}%
\next|#3|%
\deltaX=#5%
\deltaY=#6%
\def\next[##1`##2`##3`##4`##5`##6]{%
 \def\Nodea{##1}\def\Nodeb{##2}\def\Nodec{##3}%
 \def\Noded{##4}\def\Nodee{##5}\def\Nodef{##6}}%
\next[#7]%
\def\next[##1`##2`##3`##4`##5`##6`##7]{%
 \def\Labela{##1}\def\Labelb{##2}\def\Labelc{##3}\def\Labeld{##4}%
 \def\Labele{##5}\def\Labelf{##6}\def\Labelg{##7}}%
\next[#8]%
\dovSquares/#4/\ignorespaces}%
\def\dovSquares/#1`#2`#3`#4`#5`#6`#7/{%
\setmorphismwidth{\Nodea}{\Nodeb}{\Labela}{\topw}%
\setmorphismwidth{\Nodec}{\Noded}{\Labeld}{\botw}%
\ratchet{\topw}{\botw}%
\setmorphismwidth{\Nodee}{\Nodef}{\Labelg}{\botw}%
\ratchet{\topw}{\botw}%
\square(\Xpos,\Ypos)|\Xd\Xe\Xf\Xg|/`#5`#6`#7/<\topw,\deltaY>%
 [\Nodec`\Noded`\Nodee`\Nodef;`\Labele`\Labelf`\Labelg]%
\advance \Ypos by \deltaY%
\square(\Xpos,\Ypos)|\Xa\Xb\Xc\Xd|/#1`#2`#3`#4/<\topw,\deltaX>%
 [\Nodea`\Nodeb`\Nodec`\Noded;\Labela`\Labelb`\Labelc`\Labeld]%
}%
\def\vSquaresp(#1){\ifnextchar|{\vSquarespp(#1)}{\vSquarespp%
(#1)|alrmlrb|}}%
\def\vSquarespp(#1)|#2|{\ifnextchar/{\vSquaresppp(#1)|#2|}%
    {\vSquaresppp(#1)|#2|/>`>`>`>`>`>`>/}}%
\def\vSquaresppp(#1)|#2|/#3/{%
    \ifnextchar<{\vSquarespppp(#1)|#2|/#3/}%
    {\vSquarespppp(#1)|#2|/#3/<\default,\default>}}%
\def\osquarepppp(#1)|#2|/#3`#4`#5`#6/<#7>[#8]{\squarepppp%
 (#1)|#2|/#3`#4`#5`#6/<#7>[#8]%
 \let\Nodea\nodea\let\Nodeb\nodeb%
\let\Nodec\nodec\let\Noded\noded\Xpos=\xpos\Ypos=\ypos%
\deltaX=\deltax \deltaY=\deltay \isquare}%
\def\osquarep(#1){\ifnextchar|{\osquarepp(#1)}{\osquarepp(#1)|alrb|}}%
\def\osquarepp(#1)|#2|{\ifnextchar/{\osquareppp(#1)|#2|}%
    {\osquareppp(#1)|#2|/>`>`>`>/}}%
\def\osquareppp(#1)|#2|/#3`#4`#5`#6/{%
    \ifnextchar<{\osquarepppp(#1)|#2|/#3`#4`#5`#6/}%
    {\osquarepppp(#1)|#2|/#3`#4`#5`#6/<1500,1500>}}%
\def\isquarepppp(#1)|#2|/#3`#4`#5`#6/<#7>[#8]{%
 \squarepppp(#1)|#2|/#3`#4`#5`#6/<#7>[#8]%
\ifnextchar|{\cubep}{\cubep|mmmm|}}%
\def\cubep|#1|{\ifnextchar/{\cubepp|#1|}{\cubepp|#1|/>`>`>`>/}}%
\def\isquare{\ifnextchar({\isquarep}{\isquarep(\default,\default)}}%
\def\isquarep(#1){\ifnextchar|{\isquarepp(#1)}{\isquarepp(#1)|alrb|}}%
\def\isquarepp(#1)|#2|{\ifnextchar/{\isquareppp(#1)|#2|}%
    {\isquareppp(#1)|#2|/>`>`>`>/}}%
\def\isquareppp(#1)|#2|/#3`#4`#5`#6/{%
    \ifnextchar<{\isquarepppp(#1)|#2|/#3`#4`#5`#6/}%
    {\isquarepppp(#1)|#2|/#3`#4`#5`#6/<500,500>}}%
\def\cubepp|#1#2#3#4|/#5`#6`#7`#8/[#9]{%
\def\next[##1`##2`##3`##4]{\gdef\Labela{##1}%
\gdef\Labelb{##2}\gdef\Labelc{##3}\gdef\Labeld{##4}}\next[#9]%
\xend\xpos \yend\ypos%
\Xend\xend\advance\Xend by -\Xpos%
\Yend\yend\advance\Yend by -\Ypos%
\domorphism(\Xpos,\Ypos)|#2|/#6/<\Xend,\Yend>[\Nodeb`\nodeb;\Labelb]%
\advance\Xpos by-\deltaX%
\advance\xend by-\deltax%
\Xend\xend\advance\Xend by -\Xpos%
\domorphism(\Xpos,\Ypos)|#1|/#5/<\Xend,\Yend>[\Nodea`\nodea;\Labela]%
\advance\Ypos by-\deltaY%
\advance\yend by-\deltay%
\Yend\yend\advance\Yend by -\Ypos%
\domorphism(\Xpos,\Ypos)|#3|/#7/<\Xend,\Yend>[\Nodec`\nodec;\Labelc]%
\advance\Xpos by\deltaX%
\advance\xend by\deltax%
\Xend\xend\advance\Xend by -\Xpos%
\domorphism(\Xpos,\Ypos)|#4|/#8/<\Xend,\Yend>[\Noded`\noded;\Labeld]%
\ignorespaces}%
\def\setwdth#1#2{\setbox0\hbox{$\labelstyle#1$}\wdth=\wd0%
\setbox0\hbox{$\labelstyle#2$}\ifnum\wdth<\wd0 \wdth=\wd0 \fi}%
\def\topppp/#1/<#2>^#3_#4{\:%
\ifnum#2=0%
   \setwdth{#3}{#4}\deltax=\wdth \divide \deltax by \ul%
   \advance \deltax by \defaultmargin  \ratchet{\deltax}{100}%
\else \deltax #2%
\fi%
\xy\ar@{#1}^{#3}_{#4}(\deltax,0) \endxy%
\:}%
\def\toppp/#1/<#2>^#3{\ifnextchar_{\topppp/#1/<#2>^{#3}}{\topppp/#1/<#2>^{#3}_{}}}%
\def\topp/#1/<#2>{\ifnextchar^{\toppp/#1/<#2>}{\toppp/#1/<#2>^{}}}%
\def\toop/#1/{\ifnextchar<{\topp/#1/}{\topp/#1/<0>}}%
\def\to{\ifnextchar/{\toop}{\toop/>/}}%
\def\rlimto{{%
\font\xyatipfont=xyatip10 scaled 800
\font\xybtipfont=xybtip10 scaled 800
\raise 2pt\hbox{\,\xy\ar@{->}(100,0) \endxy}\,}}
\def\llimto{{%
\font\xyatipfont=xyatip10 scaled 800
\font\xybtipfont=xybtip10 scaled 800
\raise 2pt\hbox{\,\xy\ar@{<-}(100,0) \endxy}\,}}
\def\twopppp/#1`#2/<#3>^#4_#5{\:%
\ifnum0=#3%
  \setwdth{#4}{#5}\deltax=\wdth \divide \deltax by \ul \advance \deltax%
  by \defaultmargin \ratchet{\deltax}{200}%
\else \deltax#3 \fi%
\xy\ar@{#1}@<2.5pt>^{#4}(\deltax,0)%
\ar@{#2}@<-2.5pt>_{#5}(\deltax,0)\endxy\:}%
\def\twoppp/#1`#2/<#3>^#4{\ifnextchar_{\twopppp/#1`#2/<#3>^{#4}}%
  {\twopppp/#1`#2/<#3>^{#4}_{}}}%
\def\twopp/#1`#2/<#3>{\ifnextchar^{\twoppp/#1`#2/<#3>}{\twoppp/#1`#2/<#3>^{}}}%
\def\twop/#1`#2/{\ifnextchar<{\twopp/#1`#2/}{\twopp/#1`#2/<0>}}%
\def\threeppppp/#1`#2`#3/<#4>^#5|#6_#7{\:%
\ifnum0=#4%
\setbox0\hbox{$\labelstyle#5$}\wdth=\wd0%
\setbox0\hbox{$\labelstyle#6$}\ifnum\wdth<\wd0 \wdth=\wd0 \fi%
\setbox0\hbox{$\labelstyle#7$}\ifnum\wdth<\wd0 \wdth=\wd0 \fi%
\deltax=\wdth \divide \deltax by \ul \advance \deltax by%
\defaultmargin \ratchet{\deltax}{300}%
\else\deltax#4 \fi%
    \xy \ifnum\wd0=0 \ar@{#2}(\deltax,0)%
    \else \ar@{#2}|{#6}(\deltax,0)\fi%
\ar@{#1}@<4.5pt>^{#5}(\deltax,0)%
\ar@{#3}@<-4.5pt>_{#7}(\deltax,0)\endxy\:}%
\def\threepppp/#1`#2`#3/<#4>^#5|#6{\ifnextchar_{\threeppppp%
  /#1`#2`#3/<#4>^{#5}|{#6}}{\threeppppp/#1`#2`#3/<#4>^{#5}|{#6}_{}}}%
\def\threeppp/#1`#2`#3/<#4>^#5{\ifnextchar|{\threepppp%
  /#1`#2`#3/<#4>^{#5}}{\threepppp/#1`#2`#3/<#4>^{#5}|{}}}%
\def\threepp/#1`#2`#3/<#4>{\ifnextchar^{\threeppp/#1`#2`#3/<#4>}%
  {\threeppp/#1`#2`#3/<#4>^{}}}%
\def\threep/#1`#2`#3/{\ifnextchar<{\threepp/#1`#2`#3/}%
  {\threepp/#1`#2`#3/<0>}}%
\def\twoar(#1,#2){{%
 \scalefactor{0.1}%
 \deltax#1\deltay#2%
 \deltaX=\ifnum\deltax<0-\fi\deltax%
 \deltaY=\ifnum\deltay<0-\fi\deltay%
 \Xend\deltax \multiply \Xend by \deltax%
 \Yend\deltay \multiply \Yend by \deltay%
 \advance\Xend by \Yend \multiply \Xend by 3%
 \ifnum \deltaX > \deltaY%
    \multiply \deltaX by 3 \advance \deltaX by \deltaY%
 \else%
    \multiply \deltaY by 3 \advance \deltaX by \deltaY%
 \fi%
 \multiply\deltax by 500%
 \multiply\deltay by 500%
 \xpos\deltax \multiply \xpos by 3 \divide\xpos by \deltaX%
 \Xpos\deltax \multiply \Xpos by \deltaX \divide \Xpos by \Xend%
 \advance \xpos by \Xpos%
 \ypos\deltay \multiply \ypos by 3 \divide\ypos by \deltaX%
 \Ypos\deltay \multiply \Ypos by \deltaX \divide \Ypos by \Xend%
 \advance \ypos by \Ypos%
 \xy \ar@{=>}(\xpos,\ypos) \endxy%
}\ignorespaces}%
\def\iiixiiipppppp(#1,#2)|#3|/#4/<#5>#6<#7>[#8;#9]{%
 \xpos#1\ypos#2\relax%
 \def\next|##1##2##3##4##5##6##7|{\def\xa{##1}\def\xb{##2}%
 \def\xc{##3}\def\xd{##4}\def\xe{##5}\def\xf{##6}\nextt|##7|}%
 \def\nextt|##1##2##3##4##5##6|{\def\xg{##1}\def\xh{##2}%
 \def\xi{##3}\def\xj{##4}\def\xk{##5}\def\xl{##6}}%
 \next|#3|%
 \def\next<##1,##2>{\deltax##1\deltay##2}%
 \next<#5>%
 \def\next<##1,##2>{\deltaX##1\deltaY##2}%
 \next<#7>%
 \def\next##1{\topw##1\relax%
 \ifodd\topw \def\za{}\else\def\za{\relax}\fi \divide\topw by 2
 \ifodd\topw \def\zb{}\else\def\zb{\relax}\fi \divide\topw by 2
 \ifodd\topw \def\zc{}\else\def\zc{\relax}\fi \divide\topw by 2
 \ifodd\topw \def\zd{}\else\def\zd{\relax}\fi \divide\topw by 2
 \ifodd\topw \def\ze{}\else\def\ze{\relax}\fi \divide\topw by 2
 \ifodd\topw \def\zf{}\else\def\zf{\relax}\fi \divide\topw by 2
 \ifodd\topw \def\zg{}\else\def\zg{\relax}\fi \divide\topw by 2
 \ifodd\topw \def\zh{}\else\def\zh{\relax}\fi \divide\topw by 2
 \ifodd\topw \def\zi{}\else\def\zi{\relax}\fi \divide\topw by 2
 \ifodd\topw \def\zj{}\else\def\zj{\relax}\fi \divide\topw by 2
 \ifodd\topw \def\zk{}\else\def\zk{\relax}\fi \divide\topw by 2
 \ifodd\topw \def\zl{}\else\def\zl{\relax}\fi}%
 \next{#6}%
 \def\next[##1`##2`##3`##4`##5`##6`##7`##8`##9]{%
 \def\nodeA{##1}\def\nodeB{##2}\def\nodeC{##3}%
 \def\nodeD{##4}\def\nodeE{##5}\def\nodeF{##6}%
 \def\nodeG{##7}\def\nodeH{##8}\def\nodeI{##9}}%
 \next[#8]%
 \def\next[##1`##2`##3`##4`##5`##6`##7]{%
 \def\labela{##1}\def\labelb{##2}\def\labelc{##3}%
 \def\labeld{##4}\def\labele{##5}\def\labelf{##6}\nextt[##7]}%
 \def\nextt[##1`##2`##3`##4`##5`##6]{%
 \def\labelg{##1}\def\labelh{##2}\def\labeli{##3}%
 \def\labelj{##4}\def\labelk{##5}\def\labell{##6}}%
 \next[#9]%
 \def\next/##1`##2`##3`##4`##5`##6`##7`##8/{%
 \advance\ypos\deltay
    \ifx\zf\empty \morphism(\xpos,\ypos)/<-/<-\deltaX,0>[\nodeD`0;]\fi
 \morphism(\xpos,\ypos)|\xf|/{##6}/<\deltax,0>[\nodeD`\nodeE;\labelf]%
    \advance \xpos\deltax
    \morphism(\xpos,\ypos)|\xg|/{##7}/<\deltax,0>[\nodeE`\nodeF;\labelg]%
    \ifx\zg\empty \advance\xpos \deltax
        \morphism(\xpos,\ypos)<\deltaX,0>[\nodeF`0;]\fi
    \xpos#1 \advance\ypos\deltay
    \ifx\zd\empty \morphism(\xpos,\ypos)/<-/<-\deltaX,0>[\nodeA`0;]\fi
    \ifx\za\empty \morphism(\xpos,\ypos)/<-/<0,\deltaY>[\nodeA`0;]\fi
    \morphism(\xpos,\ypos)|\xa|/{##1}/<\deltax,0>[\nodeA`\nodeB;\labela]%
 \morphism(\xpos,\ypos)|\xc|/{##3}/<0,-\deltay>[\nodeA`\nodeD;\labelc]%
    \advance \xpos\deltax
     \morphism(\xpos,\ypos)|\xb|/{##2}/<\deltax,0>[\nodeB`\nodeC;\labelb]%
     \morphism(\xpos,\ypos)|\xd|/{##4}/<0,-\deltay>[\nodeB`\nodeE;\labeld]%
     \ifx\zb\empty \morphism(\xpos,\ypos)/<-/<0,\deltaY>[\nodeB`0;]\fi
     \advance\xpos\deltax
 \morphism(\xpos,\ypos)|\xe|/{##5}/<0,-\deltay>[\nodeC`\nodeF;\labele]%
     \ifx\zc\empty \morphism(\xpos,\ypos)/<-/<0,\deltaY>[\nodeC`0;]\fi
     \ifx\ze\empty \morphism(\xpos,\ypos)<\deltaX,0>[\nodeC`0;]\fi
   \nextt/##8/}%
 \def\nextt/##1`##2`##3`##4`##5/{%
 \xpos#1\ypos#2\relax%
   \ifx\zh\empty \morphism(\xpos,\ypos)/<-/<-\deltaX,0>[\nodeG`0;]\fi
   \ifx\zj\empty \morphism(\xpos,\ypos)<0,-\deltaY>[\nodeG`0;]\fi
   \morphism(\xpos,\ypos)|\xk|/{##4}/<\deltax,0>[\nodeG`\nodeH;\labelk]%
   \advance\xpos\deltax
   \morphism(\xpos,\ypos)|\xl|/{##5}/<\deltax,0>[\nodeH`\nodeI;\labell]%
   \ifx\zk\empty \morphism(\xpos,\ypos)<0,-\deltaY>[\nodeH`0;]\fi
   \advance\xpos\deltax
   \ifx\zi\empty \morphism(\xpos,\ypos)<\deltaX,0>[\nodeI`0;]\fi
   \ifx\zl\empty \morphism(\xpos,\ypos)<0,-\deltaY>[\nodeI`0;]\fi
   \xpos#1 \advance\ypos\deltay
    \morphism(\xpos,\ypos)|\xh|/{##1}/<0,-\deltay>[\nodeD`\nodeG;\labelh]%
    \advance \xpos\deltax
    \morphism(\xpos,\ypos)|\xi|/{##2}/<0,-\deltay>[\nodeE`\nodeH;\labeli]%
    \advance \xpos\deltax
 \morphism(\xpos,\ypos)|\xj|/{##3}/<0,-\deltay>[\nodeF`\nodeI;\labelj]}%
 \next/#4/\ignorespaces}%
\def\iiixiiip(#1){\ifnextchar|{\iiixiiipp(#1)}%
  {\iiixiiipp(#1)|aalmrmmlmrbb|}}%
\def\iiixiiipp(#1)|#2|{\ifnextchar/{\iiixiiippp(#1)|#2|}%
    {\iiixiiippp(#1)|#2|/>`>`>`>`>`>`>`>`>`>`>`>/}}%
\def\iiixiiippp(#1)|#2|/#3/{%
    \ifnextchar<{\iiixiiipppp(#1)|#2|/#3/}%
    {\iiixiiipppp(#1)|#2|/#3/<\default,\default>}}%
\def\iiixiiipppp(#1)|#2|/#3/<#4>{\ifnextchar[{\iiixiiippppp(#1)|#2|/#3/%
   <#4>0<0,0>}{\iiixiiippppp(#1)|#2|/#3/<#4>}}%
\def\iiixiiippppp(#1)|#2|/#3/<#4>#5{\ifnextchar<%
   {\iiixiiipppppp(#1)|#2|/#3/<#4>{#5}}%
   {\iiixiiipppppp(#1)|#2|/#3/<#4>{#5}<400,400>}}%
\def\iiixiipppppp(#1,#2)|#3|/#4/<#5>#6<#7>[#8;#9]{%
 \xpos#1\ypos#2\relax%
 \def\next|##1##2##3##4##5##6##7|{\def\xa{##1}\def\xb{##2}%
 \def\xc{##3}\def\xd{##4}\def\xe{##5}\def\xf{##6}\def\xg{##7}}%
 \next|#3|%
 \def\next<##1,##2>{\deltax##1\deltay##2}%
 \next<#5>%
 \deltaX#7
 \topw#6
 \def\next{%
 \ifodd\topw \def\za{}\else\def\za{\relax}\fi \divide\topw by 2
 \ifodd\topw \def\zb{}\else\def\zb{\relax}\fi \divide\topw by 2
 \ifodd\topw \def\zc{}\else\def\zc{\relax}\fi \divide\topw by 2
 \ifodd\topw \def\zd{}\else\def\zd{\relax}\fi}%
 \next%
 \def\next[##1`##2`##3`##4`##5`##6]{%
 \def\nodea{##1}\def\nodeb{##2}\def\nodec{##3}%
 \def\noded{##4}\def\nodee{##5}\def\nodef{##6}}%
 \next[#8]%
 \def\next[##1`##2`##3`##4`##5`##6`##7]{%
 \def\labela{##1}\def\labelb{##2}\def\labelc{##3}%
 \def\labeld{##4}\def\labele{##5}\def\labelf{##6}\def\labelg{##7}}%
 \next[#9]%
 \def\next/##1`##2`##3`##4`##5`##6`##7/{%
 {\ifx\zc\empty\advance\xpos -\deltaX
\relax\morphism(\xpos,\ypos)<\deltaX,0>[0`\noded;]\fi}%
 \morphism(\xpos,\ypos)|\xf|/##6/<\deltax,0>[\noded`\nodee;\labelf]%
 \advance\xpos by \deltax%
 \morphism(\xpos,\ypos)|\xg|/##7/<\deltax,0>[\nodee`\nodef;\labelg]%
 {\ifx\zd\empty \advance\xpos by \deltax
\relax  \morphism(\xpos,\ypos)<\deltaX,0>[\nodef`0;]\fi}%
 \advance\xpos by -\deltax  \advance\ypos by \deltay
 {\ifx\za\empty\advance \xpos by -\deltaX
\relax\morphism(\xpos,\ypos)<\deltaX,0>[0`\nodea;]\fi}%
 \morphism(\xpos,\ypos)|\xa|/##1/<\deltax,0>[\nodea`\nodeb;\labela]%
 \morphism(\xpos,\ypos)|\xc|/##3/<0,-\deltay>[\nodea`\noded;\labelc]%
 \advance\xpos by \deltax%
 \morphism(\xpos,\ypos)|\xb|/##2/<\deltax,0>[\nodeb`\nodec;\labelb]%
 \morphism(\xpos,\ypos)|\xd|/##4/<0,-\deltay>[\nodeb`\nodee;\labeld]%
 \advance\xpos by \deltax%
 \morphism(\xpos,\ypos)|\xe|/##5/<0,-\deltay>[\nodec`\nodef;\labele]%
 \ifx\zb\empty\relax \morphism(\xpos,\ypos)<\deltaX,0>[\nodec`0;]\fi}%
 \next/#4/\ignorespaces}%
\def\iiixiip(#1){\ifnextchar|{\iiixiipp(#1)}%
  {\iiixiipp(#1)|aalmrbb|}}%
\def\iiixiipp(#1)|#2|{\ifnextchar/{\iiixiippp(#1)|#2|}%
    {\iiixiippp(#1)|#2|/>`>`>`>`>`>`>/}}%
\def\iiixiippp(#1)|#2|/#3/{%
    \ifnextchar<{\iiixiipppp(#1)|#2|/#3/}%
    {\iiixiipppp(#1)|#2|/#3/<\default,\default>}}%
\def\iiixiipppp(#1)|#2|/#3/<#4>{\ifnextchar[{\iiixiippppp(#1)|#2|/#3/%
   <#4>{0}<0>}{\iiixiippppp(#1)|#2|/#3/<#4>}}%
\def\iiixiippppp(#1)|#2|/#3/<#4>#5{\ifnextchar<%
   {\iiixiipppppp(#1)|#2|/#3/<#4>{#5}}%
   {\iiixiipppppp(#1)|#2|/#3/<#4>{#5}<400>}}%
\def\node#1(#2,#3)[#4]{%
\expandafter\gdef\csname x@#1\endcsname{#2}%
\expandafter\gdef\csname y@#1\endcsname{#3}%
\expandafter\gdef\csname ob@#1\endcsname{#4}%
\place(#2,#3)[#4]\ignorespaces}%
\def\arrow{\ifnextchar|{\arrowp}{\arrowp|a|}}%
\def\arrowp|#1|{\ifnextchar/{\arrowpp|#1|}{\arrowpp|#1|/>/}}%
\def\arrowpp|#1|/#2/[#3`#4;#5]{%
\xfinish=\csname x@#4\endcsname%
\yfinish=\csname y@#4\endcsname%
\advance\xfinish by -\csname x@#3\endcsname%
\advance\yfinish by -\csname y@#3\endcsname%
\morphism(\csname x@#3\endcsname,\csname y@#3\endcsname)|#1|/{#2}/%
<\xfinish,\yfinish>[\phantom{\csname ob@#3\endcsname}`\phantom{\csname
ob@#4\endcsname};#5]%
}%
\def\Loop(#1,#2)#3(#4,#5){\POS(#1,#2)*+!!<0ex,\axis>{#3}\ar@(#4,#5)}%
\def\iloop#1(#2,#3){\xy\Loop(0,0)#1(#2,#3)\endxy}%
     \let \PATHafterPOS\PATHafterPOS@default%
     \let \arsavedPATHafterPOS@@\relax%
     \let\afterar@@\relax%
\xydef@\endxyobj{\if\inxy@\else\xyerror@{Unexpected \string\endxy}{}\fi%
>  \relax%
>   \dimen@=\Y@max \advance\dimen@-\Y@min%
>   \ifdim\dimen@<\z@ \dimen@=\z@ \Y@min=\z@ \Y@max=\z@ \fi%
>   \dimen@=\X@max \advance\dimen@-\X@min%
>   \ifdim\dimen@<\z@ \dimen@=\z@ \X@min=\z@ \X@max=\z@ \fi%
>   \edef\tmp@{\egroup%
>     \setboxz@h{\kern-\the\X@min \boxz@}%
>     \ht\z@=\the\Y@max \dp\z@=-\the\Y@min \wdz@=\the\dimen@%
>     \noexpand\maybeunraise@ \raise\dimen@\boxz@%
>     \noexpand\recoverXyStyle@ \egroup \noexpand\xy@end%
>     \U@c=\the\Y@max \advance\U@c-\the\Y@c%
>     \D@c=-\the\Y@min \advance\D@c\the\Y@c%
>     \L@c=-\the\X@min  \advance\L@c\the\X@c%
>     \R@c=\the\X@max  \advance\R@c-\the\X@c%
>    }\tmp@}%
\gdef\xymerge@MinMax{}%
\xydef@\twocell{\hbox\bgroup\xysave@MinMax\@twocell}%
\xydef@\uppertwocell{\hbox\bgroup\xysave@MinMax\@uppertwocell}%
\xydef@\lowertwocell{\hbox\bgroup\xysave@MinMax\@lowertwocell}%
\xydef@\compositemap{\hbox\bgroup\xysave@MinMax\@compositemap}%
\xydef@\xysave@MinMax{\xdef\xymerge@MinMax{%
   \noexpand\ifdim\X@max<\the\X@max \X@max=\the\X@max\noexpand\fi%
   \noexpand\ifdim\X@min>\the\X@min \X@min=\the\X@min\noexpand\fi%
   \noexpand\ifdim\Y@max<\the\Y@max \Y@max=\the\Y@max\noexpand\fi%
   \noexpand\ifdim\Y@min>\the\Y@min \Y@min=\the\Y@min\noexpand\fi%
  }}%
\xydef@\drop@Twocell{\boxz@ \xymerge@MinMax}%
\xydef@\twocell@DONE{%
  \edef\tmp@{\egroup%
   \X@min=\the\X@min \X@max=\the\X@max%
   \Y@min=\the\Y@min \Y@max=\the\Y@max}\tmp@%
  \L@c=\X@c \advance\L@c-\X@min \R@c=\X@max \advance\R@c-\X@c%
  \D@c=\Y@c \advance\D@c-\Y@min \U@c=\Y@max \advance\U@c-\Y@c%
  \ht\z@=\U@c \dp\z@=\D@c \dimen@=\L@c \advance\dimen@\R@c \wdz@=\dimen@%
  \computeLeftUpness@%
  \setboxz@h{\kern-\X@p \raise-\Y@c\boxz@ }%
  \dimen@=\L@c \advance\dimen@\R@c \wdz@=\dimen@ \ht\z@=\U@c \dp\z@=\D@c%
  \Edge@c={\rectangleEdge}\Invisible@false \Hidden@false%
  \edef\Drop@@{\noexpand\drop@Twocell%
   \noexpand\def\noexpand\Leftness@{\Leftness@}%
   \noexpand\def\noexpand\Upness@{\Upness@}}%
  \edef\Connect@@{\noexpand\connect@Twocell%
   \noexpand\ifdim\X@max<\the\X@max \X@max=\the\X@max\noexpand\fi%
   \noexpand\ifdim\X@min>\the\X@min \X@min=\the\X@min\noexpand\fi%
   \noexpand\ifdim\Y@max<\the\Y@max \Y@max=\the\Y@max\noexpand\fi%
   \noexpand\ifdim\Y@min>\the\Y@min \Y@min=\the\Y@min\noexpand\fi }%
  \xymerge@MinMax%
}%
\newbox\anglebox 
\newbox\angleboxr 
\newbox\sanglebox 
\newbox\sangleboxr 
\newbox\sangleboxf 
\newbox\angleboxf 
\newbox\sangleboxfr 
\newbox\angleboxfr 
\newcommand{\Sets}{\ensuremath{\mathbf{Set}}}
\newcommand{\theory}{\ensuremath{\mathbb{T}}}
\title{INFINITARY GENERALIZATIONS OF DELIGNE'S COMPLETENESS THEOREM}
\author{Christian Esp\'indola}
\begin{document}
\date{}
\maketitle

\begin{abstract}
Given a regular cardinal $\kappa$ such that $\kappa^{<\kappa}=\kappa$ (or any regular $\kappa$ if the Generalized Continuum Hypothesis holds), we study a class of toposes with enough points, the $\kappa$-separable toposes. These are equivalent to sheaf toposes over a site with $\kappa$-small limits that has at most $\kappa$ many objects and morphisms, the (basis for the) topology being generated by at most $\kappa$ many covering families, and that satisfy a further exactness property $T$. We prove that these toposes have enough $\kappa$-points, that is, points whose inverse image preserve all $\kappa$-small limits. This generalizes the separable toposes of Makkai and Reyes, that are a particular case when $\kappa=\omega$, when property $T$ is trivially satisfied. This result is essentially a completeness theorem for a certain infinitary logic that we call $\kappa$-geometric, where conjunctions of less than $\kappa$ formulas and existential quantification on less than $\kappa$ many variables is allowed. We prove that $\kappa$-geometric theories have a $\kappa$-classifying topos having property $T$, the universal property being that models of the theory in a Grothendieck topos with property $T$ correspond to $\kappa$-geometric morphisms (geometric morphisms the inverse image of which preserves all $\kappa$-small limits) into that topos. Moreover, we prove that $\kappa$-separable toposes occur as the $\kappa$-classifying toposes of $\kappa$-geometric theories of at most $\kappa$ many axioms in canonical form, and that every such $\kappa$-classifying topos is $\kappa$-separable. Finally, we consider the case when $\kappa$ is weakly compact and study the $\kappa$-classifying topos of a $\kappa$-coherent theory (with at most $\kappa$ many axioms), that is, a theory where only disjunction of less than $\kappa$ formulas are allowed, obtaining a version of Deligne's theorem for $\kappa$-coherent toposes from which we can derive, among other things, Karp's completeness theorem for infinitary classical logic.
\end{abstract}

\noindent $\mathbf{Keywords:}$ classifying topos, infinitary logics, completeness theorems, sheaf models.

\section{Introduction}

This paper is a continuation of the investigation begun in \cite{espindola} on infinitary categorical logic, focusing now on infinitary generalizations of Deligne's completeness theorem. This theorem asserts that a coherent topos has enough points, which is essentially G\"odel completeness theorem for finitary first-order classical logic. Makkai and Reyes in \cite{mr} prove that the same is true for the so called separable toposes, those toposes of sheaves on a site that has countably many objects and morphisms and whose topology is generated by countably many covering families. This result is in turn related to the completeness of countably axiomatized theories in $\mathcal{L}_{\omega_1, \omega}$. It turns out, as we prove in the present paper, that this result can be generalized in a way that $\omega$ is replaced with any regular cardinal $\kappa$ such that $\kappa^{<\kappa}=\kappa$ (a condition satisfied for inaccessible $\kappa$, or, under the Generalized Continuum Hypothesis, for any regular $\kappa$), and that, naturally, the result is essentially a completeness theorem for what we call $\kappa$-geometric logic. This logic is an extension of geometric logic in which arities of function and relation symbols are cardinals less than $\kappa$, and one can take conjunctions of less than $\kappa$ many formulas and existential quantification of less than $\kappa$ many variables. The theory of well-orderings, for instance (see \cite{dickmann}), cannot be expressed in finite-quantifier languages, but it is $\kappa$-geometric.

In this more expressive extension there are valid sequents that cannot be derived from the usual axioms of geometric logic even if one extends the usual rules and axioms for conjunction and existential quantification. For example, the axiom of choice is expressible in the form:

$$\bigwedge_{i<\gamma}\exists \mathbf{x_i}\phi(\mathbf{x_i}) \vdash_{\mathbf{x}} \exists_{i<\gamma} \mathbf{x_i} \bigwedge_{i<\gamma} \phi(\mathbf{x_i})$$
\\
and is certainly valid (in all $\mathcal{S}et$-valued models) but not derivable. 

However, this more expressive extension valid sequents can be made derivable in $\kappa$-geometric logic with the addition of one special rule, which is a refined version of transfinite the rule of transitivity introduced in \cite{espindola}. This rule corresponds to an exactness condition $T$ satisfied by $\kappa$-geometric categories (the categories associated to $\kappa$-geometric theories), and it turns out that $\kappa$-geometric theories have what we call a $\kappa$-classifying topos. This is a Grothendieck topos satisfying the exactness condition $T$, where there is a generic model of the theory whose image along the inverse image of $\kappa$-geometric morphisms (those geometric morphisms whose inverse image preserve $\kappa$-small limits) corresponds precisely to the models of the theory in any other topos that satisfies the same exactness condition $T$. 

In the same way that countably axiomatized geometric theories are complete (with respect to $\mathcal{S}et$-valued models), one can prove that $\kappa$-geometric theories with at most $\kappa$ many axioms are also complete. The restriction on the cardinality of the axiomatization will then imply that the corresponding $\kappa$-classifying topos will be $\kappa$-separable, and by completeness it will be possible to prove that it has enough $\kappa$-points (points whose inverse image preserve $\kappa$-small limits). Since property $T$ is valid in $\mathcal{S}et$, the existence of enough $\kappa$-points for a given topos necessarily implies that the topos has property $T$, whence its naturalness in the definition of $\kappa$-separable toposes. Indeed, a localic topos $\mathcal{S}h(L)$ without points cannot be $\kappa$-separable for $\kappa>2^L$, even if its site does have $\kappa$-small limits, at most $\kappa$ many objects and morphisms, and the (basis for the) topology is generated by at most $\kappa$ many covering families.

In the particular case when $\kappa$ is a weakly compact cardinal, the exactness property $T$ on the $\kappa$-classifying topos of a $\kappa$-coherent theory (a $\kappa$-geometric theory where all disjunctions in the axioms are indexed by ordinals less than $\kappa$) that is axiomatized with at most $\kappa$-many axioms, adopts the form of the transfinite transitivity rule of \cite{espindola}. In this case, the topos is equivalent to sheaves on a site where Grothendieck topology is generated by families of less than $\kappa$ many morphisms, and its $\kappa$-separability, that implies that it has enough $\kappa$-points, is the precise generalization of Deligne's completeness theorem from coherent toposes to $\kappa$-coherent toposes. 

It turns out that the corresponding completeness theorem for $\kappa$-coherent theories (associated with Deligne's theorem for $\kappa$-coherent toposes) adopts, via Morleyization, the form of Karp's completeness theorem for $\mathcal{L}_{\kappa, \kappa}$, the exactness property $T$ corresponding to a combination of her distributivity and dependent choice axioms. 

We will also see that in the particular case $\kappa=\omega$, our result is just Makkai and Reyes result for separable toposes, since the exactness property $T$ in a separable topos is always satisfied.

\subsection{$\kappa$-geometric logic}

Let $\kappa$ be a regular cardinal. The syntax of $\kappa$-geometric logic consists of a (well-ordered) set of sorts and a set of function and relation symbols, these latter together with the corresponding type, which is a subset with less than $\kappa$ many sorts. Therefore, we assume that our signature may contain relation and function symbols on $\gamma<\kappa$ many variables, and we suppose there is a supply of $\kappa$ many fresh variables of each sort. Terms and atomic formulas are defined as usual, and general formulas are defined inductively according to the following:

\begin{defs} If $\phi, \psi, \{\phi_{\alpha}: \alpha<\gamma\}$ (for each $\gamma<\kappa$) and $\{\psi_{\alpha}: \alpha<\delta\}$ (for each $\delta$) are $\kappa$-geometric formulas, the following are also formulas: $\bigwedge_{\alpha<\gamma}\phi_{\alpha}$, $\exists_{\alpha<\gamma} x_{\alpha} \phi$ (also written $\exists \mathbf{x}_{\gamma} \phi$ if $\mathbf{x}_{\gamma}=\{x_{\alpha}: \alpha<\gamma\}$) and $\bigvee_{\alpha<\delta}\psi_{\alpha}$, this latter provided that $\cup_{\alpha<\delta}FV(\psi_{\alpha})$, the set of free variables of all $\psi_{\alpha}$, has cardinality less than $\kappa$.
\end{defs}
 
We use sequent style calculus to formulate the axioms of first-order logic, as can be found, e.g., in \cite{johnstone}, D1.3. The system for $\kappa$-geometric logic is described in the following:

\begin{defs}\label{sfol}
 The system of axioms and rules for $\kappa$-geometric logic consists of

\begin{enumerate}
 \item Structural rules:
 \begin{enumerate}
 \item Identity axiom:
\begin{mathpar}
\phi \vdash_{\mathbf{x}} \phi 
\end{mathpar}
\item Substitution rule:
\begin{mathpar}
\inferrule{\phi \vdash_{\mathbf{x}} \psi}{\phi[\mathbf{s}/\mathbf{x}] \vdash_{\mathbf{y}} \psi[\mathbf{s}/\mathbf{x}]} 
\end{mathpar}

where $\mathbf{y}$ is a string of variables including all variables occurring in the string of terms $\mathbf{s}$.
\item Cut rule:
\begin{mathpar}
\inferrule{\phi \vdash_{\mathbf{x}} \psi \\ \psi \vdash_{\mathbf{x}} \theta}{\phi \vdash_{\mathbf{x}} \theta} 
\end{mathpar}
\end{enumerate}

\item Equality axioms:

\begin{enumerate}
\item 

\begin{mathpar}
\top \vdash_{x} x=x 
\end{mathpar}

\item 

\begin{mathpar}
(\mathbf{x}=\mathbf{y}) \wedge \phi \vdash_{\mathbf{z}} \phi[\mathbf{y}/\mathbf{x}]
\end{mathpar}

where $\mathbf{x}$, $\mathbf{y}$ are contexts of the same length and type and $\mathbf{z}$ is any context containing $\mathbf{x}$, $\mathbf{y}$ and the free variables of $\phi$.
\end{enumerate}

\item Conjunction axioms and rules:

$$\bigwedge_{i<\gamma} \phi_i \vdash_{\mathbf{x}} \phi_j$$

\begin{mathpar}
\inferrule{\{\phi \vdash_{\mathbf{x}} \psi_i\}_{i<\gamma}}{\phi \vdash_{\mathbf{x}} \bigwedge_{i<\gamma} \psi_i}
\end{mathpar}

for each cardinal $\gamma<\kappa$.

\item Disjunction axioms and rules:

$$\phi_j \vdash_{\mathbf{x}} \bigvee_{i<\gamma} \phi_i$$

\begin{mathpar}
\inferrule{\{\phi_i \vdash_{\mathbf{x}} \theta\}_{i<\gamma}}{\bigvee_{i<\gamma} \phi_i \vdash_{\mathbf{x}} \theta}
\end{mathpar}

for each cardinal $\gamma$.

\item Existential rule:
\begin{mathpar}
\mprset{fraction={===}}
\inferrule{\phi \vdash_{\mathbf{x} \mathbf{y}} \psi}{\exists \mathbf{y}\phi \vdash_{\mathbf{x}} \psi}
\end{mathpar}

where no variable in $\mathbf{y}$ is free in $\psi$.

\item Small distributivity axiom

$$\phi \wedge \bigvee_{i<\gamma} \psi_{i} \vdash_{\mathbf{x}} \bigvee_{i<\gamma}\phi \wedge \psi_{i}$$

for each cardinal $\gamma$.

\item Frobenius axiom:

$$\phi \wedge \exists \mathbf{y} \psi \vdash_{\mathbf{x}} \exists \mathbf{y} (\phi \wedge \psi)$$

\noindent where no variable in $\mathbf{y}$ is in the context $\mathbf{x}$.

\item Rule $T$:

\begin{mathpar}
\inferrule{\phi_{f} \vdash_{\mathbf{y}_{f}} \bigvee_{g \in \gamma^{\beta+1}, g|_{\beta}=f} \exists \mathbf{x}_{g} \phi_{g} \\ \beta<\kappa, f \in \gamma^{\beta} \\\\ \phi_{f} \dashv \vdash_{\mathbf{y}_{f}} \bigwedge_{\alpha<\beta}\phi_{f|_{\alpha}} \\ \beta < \kappa, \text{ limit }\beta, f \in \gamma^{\beta}}{\phi_{\emptyset} \vdash_{\mathbf{y}_{\emptyset}} \bigvee_{f \in B}  \exists_{\beta<\delta_f}\mathbf{x}_{f|_{\beta +1}} \bigwedge_{\beta<\delta_f}\phi_{f|_{\beta+1}}}
\end{mathpar}
\\
for each cardinal $\gamma$, where $\mathbf{y}_{f}$ is the canonical context of $\phi_{f}$, provided that, for every $f \in \gamma^{\beta+1}$,  $FV(\phi_{f}) = FV(\phi_{f|_{\beta}}) \cup \mathbf{x}_{f}$ and $\mathbf{x}_{f|_{\beta +1}} \cap FV(\phi_{f|_{\beta}})= \emptyset$ for any $\beta<\gamma$, as well as $FV(\phi_{f}) = \bigcup_{\alpha<\beta} FV(\phi_{f|_{\alpha}})$ for limit $\beta$. Here $B \subseteq \gamma^{< \kappa}$ consists of the minimal elements of a given bar\footnote{A bar over the tree $\gamma^{< \kappa}$ is an upward closed subset of nodes intersecting every branch of the tree.} over the tree $\gamma^{< \kappa}$, and the $\delta_f$ are the levels of the corresponding $f \in B$. 

\end{enumerate}
\end{defs}

The rule $T$ can be understood as follows. Consider $\gamma^{< \kappa}$, the $\gamma$-branching tree of height $\kappa$, i.e., the poset of functions $f: \beta \to \gamma$ for $\beta \leq \gamma$ with the order given by inclusion. Suppose there is an assignment of formulas $\phi_f$ to each node $f$ of $\gamma^{< \kappa}$. Then the rule expresses that if the assignment is done in a way that the formula assigned to each node entails the join of the formulas assigned to its immediate successors, and if the formula assigned to a node in a limit level is equivalent to the meet of the formulas assigned to its predecessors, then the formula assigned to the root entails the join of the formulas assigned to the nodes ranging among the minimal elements of a given bar over the tree $\gamma^{< \kappa}$.

\section{$\kappa$-geometric categories}

\subsection{The exactness property $T$}

The $\kappa$-geometric fragment of first-order logic, which is an extension of the usual geometric fragment, has a corresponding category which we are now going to define. Following \cite{makkai}, consider a $\kappa$-chain in a category $\mathcal{C}$ with $\kappa$-limits, i.e., a diagram $\Gamma: \gamma^{op} \to \mathcal{C}$ specified by morphisms $(h_{\beta, \alpha}: C_{\beta} \to C_{\alpha})_{\alpha \leq \beta<\gamma}$ such that the restriction $\Gamma|_{\beta}$ is a limit diagram for every limit ordinal $\beta$. We say that the morphisms $h_{\beta, \alpha}$ compose transfinitely, and take the limit projection $f_{\beta, 0}$ to be the transfinite composite of $h_{\alpha+1, \alpha}$ for $\alpha<\beta$.

Given any cardinal $\gamma$, consider the tree $S=\gamma^{<\kappa}$. We will consider diagrams $F: S^{op} \to \mathcal{C}$, which determine, for each node $f$, a family of arrows in $\mathcal{C}$, $\{h_{g, f}: C_{g} \to C_{f} | f \in \gamma^{\beta}, g \in \gamma^{\beta+1}, g|_{\beta}=f\}$. A $\gamma$-family of morphisms with the same codomain is said to be \emph{jointly covering} if the union of the images of the morphisms is the whole codomain. We say that a diagram $F: S^{op} \to \mathcal{C}$ is \emph{proper} if the $\{h_{g, f}: f \in S\}$ are jointly covering and, for limit $\beta$, $h_{f, \emptyset}$ is the transfinite composition of the $h_{f|_{\alpha+1}, f|_{\alpha}}$ for $\alpha+1<\beta$. Given a proper diagram and a bar over $S$ whose minimal node intersecting the branch $b$ has level $\delta_b$, we say that the families $\{h_{g, f}: f \in S\}$ compose transfinitely, and refer to the arrows $\{h_{g_b, \emptyset} | g \in \gamma^{\delta_b}, b \in \gamma^{<\kappa}\}$ as the transfinite composites (up to $\kappa$) of these families with respect to the bar. If in a proper diagram the transfinite composites of the $\kappa$-families of morphisms form itself a jointly covering family, we will say that the diagram is completely proper.

\begin{defs}
 A $\kappa$-geometric category is a $\kappa$-complete geometric category with complete subobject lattices where arbitrary unions are stable under pullback, and where every proper diagram is completely proper, i.e., the transfinite composites (up to $\kappa$) of jointly covering $\kappa$-families of morphisms form a jointly covering family.
\end{defs}

\begin{defs}
 A category with $\kappa$-small limits and arbitrary unions is said to have the exactness property $T$ if every proper diagram (corresponding to any given tree with any given bar over it) is completely proper.
\end{defs}

 This is evidently valid in $\mathcal{S}et$, and in fact in every presheaf category.

$\kappa$-geometric categories have an internal logic, in a signature containing one sort for each object, no relation symbols and one unary function symbol for each arrow, and axiomatized by the following sequents:

$$\top \vdash_x \it Id_X(x)=x$$
\\
for all objects $X$ (here $x$ is a variable of sort $X$);

$$\top \vdash_x f(x)=h(g(x))$$
\\
for all triples of arrows such that $f=h \circ g$ (here $x$ is a variable whose sort is the domain of $f$);

$$\top \vdash_y \exists x f(x)=y$$
\\
for all covers $f$ (here $x$ is a variable whose sort is the domain of $f$);

$$\top \vdash_x \bigvee_{i<\gamma}\exists y_i m_i(y_i)=x$$
\\
whenever the sort $A$ of $x$ is the union of $\gamma$ subobjects $m_i: A_i \rightarrowtail A$ (here $y_i$ is a variable of sort $A_i$);

$$\bigwedge_{i:I \to J}\overline{i}(x_I)=x_J \vdash_{\{x_I: I \in \mathbf{I}\}} \exists x \bigwedge_{I \in \mathbf{I}} \pi_I(x)=x_I$$

$$\bigwedge_{I \in \mathbf{I}}\pi_I(x)=\pi_I(y) \vdash_{x, y} x=y$$
\\
whenever there is a $\kappa$-small diagram $\Phi: \mathbf{I} \to \mathcal{C}$, $(\{C_I\}_{I \in \mathbf{I}}, \{\overline{i}: C_I \to C_J)\}_{i: I \to J})$ and a limit cone $\pi: \Delta_C \Rightarrow \Phi$, $(\pi_I: C \to C_I)_{I \in \mathbf{I}}$. Here $x_I$ is a variable of type $C_I$, and $x, y$ are variables of type $C$.

Functors preserving this logic, i.e., $\kappa$-geometric functors, are just geometric functors which preserve $\kappa$-limits, and they can be easily seen to correspond to structures of the internal theory in a given $\kappa$-geometric category, where we use a straightforward generalization of categorical semantics, as explained e.g. in \cite{johnstone}, D1.2.

\begin{lemma}\label{soundness}
$\kappa$-geometric logic is sound with respect to models in $\kappa$-geometric categories.
\end{lemma}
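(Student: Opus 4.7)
The proof proceeds by induction on the length of a derivation of $\phi \vdash_{\mathbf{x}} \psi$, showing that for any $\kappa$-geometric structure $M$ in a $\kappa$-geometric category $\cat{C}$ one has $\sem{\phi}_{\mathbf{x}}^{M} \leq \sem{\psi}_{\mathbf{x}}^{M}$ as subobjects of the interpretation of the context. Almost all cases are routine and mirror the classical finitary argument of \cite{johnstone}, D1.3, carried out now in a category with $\kappa$-small limits and a complete, pullback-stable subobject lattice. Specifically, the structural and equality rules use products and diagonal morphisms; the conjunction axiom and rule use $\kappa$-small meets of subobjects (available by $\kappa$-completeness); the disjunction axiom and rule use arbitrary joins (available by completeness of the subobject lattices); the small distributivity axiom uses stability of arbitrary unions under pullback; the existential rule uses images of the projections between sort products; and Frobenius combines stability of unions under pullback with the adjunction between pullback and image.

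The only rule that engages the exactness property $T$ in an essential way is Rule $T$ itself. To verify its soundness, given the hypotheses, let $A_{f} \hookrightarrow \sem{\mathbf{y}_{f}}^{M}$ be the interpretation of $\phi_{f}$ for each $f \in \gamma^{<\kappa}$. Whenever $f \in \gamma^{\beta}$ and $g \in \gamma^{\beta+1}$ extends $f$, the inclusion of contexts $\mathbf{y}_{f} \subseteq \mathbf{y}_{g}$ (by the condition $FV(\phi_{g}) = FV(\phi_{f}) \cup \mathbf{x}_{g}$) determines a projection $p_{g,f} \colon \sem{\mathbf{y}_{g}}^{M} \to \sem{\mathbf{y}_{f}}^{M}$, and upon restriction a morphism $h_{g,f} \colon A_{g} \to A_{f}$. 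At a limit $\beta$ with $f \in \gamma^{\beta}$, the assumption $FV(\phi_{f}) = \bigcup_{\alpha<\beta} FV(\phi_{f|_{\alpha}})$ together with $\phi_{f} \dashv\vdash \bigwedge_{\alpha<\beta} \phi_{f|_{\alpha}}$ shows (by the already verified soundness of conjunction and equality) that $A_{f}$ is precisely the limit in $\cat{C}$ of the restriction of this diagram to the branch up to level $\beta$. Thus the $A_{f}$ and $h_{g,f}$ assemble into a diagram $F \colon S^{\mathrm{op}} \to \cat{C}$ on the tree $S = \gamma^{<\kappa}$, and the first hypothesis of Rule $T$ is precisely the statement that each family $\{h_{g,f} : g|_{\beta} = f\}$ is jointly covering. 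Hence $F$ is a \emph{proper} diagram in the sense of the paper.

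Applying the exactness property $T$, the diagram $F$ is \emph{completely proper}, meaning that for any bar $B \subseteq \gamma^{<\kappa}$ the transfinite composites $\{h_{f,\emptyset} : f \in B\}$ form a jointly covering family of $A_{\emptyset}$. Translating this back into the internal logic via the standard interpretation of existential quantification as image along a projection, and of disjunction as join of subobjects, the cover in question is exactly the interpretation of
\[
\bigvee_{f \in B} \exists_{\beta<\delta_{f}} \mathbf{x}_{f|_{\beta+1}} \bigwedge_{\beta<\delta_{f}} \phi_{f|_{\beta+1}},
\]
so $\sem{\phi_{\emptyset}}_{\mathbf{y}_{\emptyset}}^{M}$ is dominated by this subobject, as required.

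The main obstacle, and the only substantive part of the argument, is the bookkeeping that identifies the syntactic interpretation with a proper diagram: one must verify that the categorical projections $h_{g,f}$ arising from the inclusion of contexts really do correspond to the formulas $\phi_{g}$ at successor stages, and that the limits at limit stages, as computed in $\cat{C}$, coincide with the interpretations of the infinitary meets $\bigwedge_{\alpha<\beta} \phi_{f|_{\alpha}}$. Once this identification is established, the conclusion is a direct instance of the defining exactness property of $\kappa$-geometric categories.
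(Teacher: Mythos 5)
Your overall strategy---dispatching the standard rules by the classical argument of \cite{johnstone} and isolating Rule $T$ as the one place where the exactness property enters---is exactly the intended one; the paper's own proof is a one-line deferral to the analogous soundness argument for the transfinite transitivity rule in \cite{espindola}. However, the construction of the proper diagram, which you correctly identify as the only substantive step, does not go through as you have set it up. You take $A_f = \sem{\phi_f}$ and assert that the context projection $p_{g,f}$ restricts to a morphism $h_{g,f}\colon A_g \to A_f$. Nothing in the hypotheses of Rule $T$ guarantees this: the rule assumes $\phi_f \vdash_{\mathbf{y}_f} \bigvee_g \exists\mathbf{x}_g\,\phi_g$, not $\phi_g \vdash_{\mathbf{y}_g} \phi_{g|_\beta}$, so the image of $A_g$ under $p_{g,f}$ need not land inside $A_f$ (consider $\phi_f = P(x)$ and $\phi_g = Q(x,y)$ with $P(x)\vdash_x \exists y\, Q(x,y)$ but $Q(x,y)\nvdash_{x,y} P(x)$). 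So the arrows of your diagram need not exist. A second symptom of the same problem appears at the end: even granting the diagram, the transfinite composites would cover $A_\emptyset$ by images of the $A_f$ for $f\in B$, yielding only $\phi_\emptyset \vdash \bigvee_{f\in B}\exists\cdots\phi_f$, which is weaker than the stated conclusion $\bigvee_{f\in B}\exists\cdots\bigwedge_{\beta<\delta_f}\phi_{f|_{\beta+1}}$; the conjunction along the branch, which is the whole point of the rule's conclusion, never enters your diagram.

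The repair is to run the argument with the cumulative subobjects $D_f := \bigwedge_{\alpha\le\beta} p^*_{f,f|_\alpha}\sem{\phi_{f|_\alpha}} \le \sem{\mathbf{y}_f}$ for $f$ of level $\beta$, i.e.\ the interpretation of the conjunction of all formulas along the path to $f$. These do admit restriction maps $h_{g,f}\colon D_g\to D_f$, since $D_g \le p_{g,f}^*(D_f)$ by construction. The first hypothesis of the rule, combined with stability of unions under pullback and Frobenius reciprocity, gives $D_f = \bigcup_g \mathrm{im}(D_g\to D_f)$, so these families are jointly covering; the limit-stage hypothesis ensures that $D_f$ is the limit of the chain at limit levels (there, and only there, $A_f$ and $D_f$ coincide, which is where your limit-stage observation is salvaged); and the image of $D_f$ for $f\in B$ under the projection to $\sem{\mathbf{y}_\emptyset}$ is precisely the interpretation of $\exists\cdots\bigwedge_{\beta<\delta_f}\phi_{f|_{\beta+1}}$ restricted to $\phi_\emptyset$. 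With that substitution the rest of your argument is correct and coincides with the intended proof.
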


\begin{proof}
 Straightforward. The proof of soundness of proerty $T$ is similar to the proof of soundness of the transfinite transitivity rule from \cite{espindola}.
\end{proof}

\subsection{Completeness of $\kappa$-geometric logic}

The following can be considered as a completeness theorem in terms of models in $\kappa$-geometric categories:

\begin{proposition}\label{catcomp}
 If $\theory$ is a $\kappa$-geometric theory, then its syntactic category $\mathcal{C}_{\theory}$ is a $\kappa$-geometric category.
\end{proposition}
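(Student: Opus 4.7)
My plan is to define $\mathcal{C}_{\theory}$ in the standard fashion: objects are $\alpha$-equivalence classes of formulas-in-context $\{\mathbf{x}.\phi\}$ with $|\mathbf{x}|<\kappa$, and morphisms $\{\mathbf{x}.\phi\}\to\{\mathbf{y}.\psi\}$ are $\theory$-provable equivalence classes of $\kappa$-geometric formulas $\theta(\mathbf{x},\mathbf{y})$ that $\theory$ proves to be functional from $\phi$ to $\psi$. Composition and identities are defined in the evident way, and routine manipulation of the structural, conjunction, existential and Frobenius axioms (Definition \ref{sfol}) shows this is a well-defined category. The essential point that makes the construction available for cardinals $<\kappa$ is that all contexts have size $<\kappa$, so any construction that produces a new context as a $\kappa$-small sum of old contexts stays within the class.

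Next I would verify that $\mathcal{C}_{\theory}$ is $\kappa$-complete and geometric. Finite limits are built by the usual product-plus-equalizer recipe, and $\kappa$-small limits are obtained by concatenating $<\kappa$ many contexts and taking the conjunction of the equations forced by the diagram (this is where the axioms for $\bigwedge_{i<\gamma}$ and the existence of contexts of size $<\kappa$ are used). Images of morphisms are represented by existential quantification, giving the factorization system $(\text{cover},\text{mono})$. Subobjects of $\{\mathbf{x}.\phi\}$ correspond to $\theory$-provable-equivalence classes of formulas in context $\mathbf{x}$ refining $\phi$; arbitrary joins are represented by disjunctions (the disjunction axioms in Definition \ref{sfol}(4) and the small distributivity axiom (6) are available for arbitrary indexing sets, since only the free-variable count is constrained by $\kappa$, and here the context is fixed), and arbitrary meets then exist by the adjoint functor theorem for complete lattices. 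Pullback stability of unions is exactly the small distributivity axiom together with Frobenius, applied levelwise.

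The main step, and the heart of the proposition, is establishing the exactness property $T$. I would take a proper diagram $F:S^{op}\to\mathcal{C}_{\theory}$ with $S=\gamma^{<\kappa}$ and a bar $B\subseteq S$, and translate it to syntax as follows. By replacing each $C_f$ with the image of the transfinite composite $C_f\to C_\emptyset$ (using that $\mathcal{C}_{\theory}$ already has images and arbitrary unions from the preceding paragraph), one may assume each $F(f)$ is a subobject $\{\mathbf{y}_f.\phi_f\}\rightarrowtail\{\mathbf{y}_\emptyset.\phi_\emptyset\}$, with the canonical contexts arranged so that $\mathbf{y}_f=\mathbf{y}_{f|_\beta}\cup\mathbf{x}_f$ for $f\in\gamma^{\beta+1}$ and $\mathbf{y}_f=\bigcup_{\alpha<\beta}\mathbf{y}_{f|_\alpha}$ at limit stages, with the disjointness and free-variable conditions required by rule $T$. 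The hypothesis that the successor families are jointly covering translates precisely to $\phi_f\vdash_{\mathbf{y}_f}\bigvee_{g|_\beta=f}\exists\mathbf{x}_g\phi_g$, and the limit-stage condition that $\Gamma|_\beta$ is a limit diagram translates to $\phi_f\dashv\vdash_{\mathbf{y}_f}\bigwedge_{\alpha<\beta}\phi_{f|_\alpha}$. Rule $T$ of Definition \ref{sfol}(8) then yields exactly the sequent asserting that the transfinite composites with respect to $B$ form a jointly covering family, which is the completeness of the proper diagram.

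The main obstacle, and the step that deserves the most care, is the bookkeeping of contexts in the translation of the proper diagram to the hypotheses of rule $T$: one must verify that the replacement by images can be carried out so that the context and free-variable hypotheses of rule $T$ ($FV(\phi_f)=FV(\phi_{f|_\beta})\cup\mathbf{x}_f$, disjointness of $\mathbf{x}_{f|_{\beta+1}}$ from earlier free variables, and the union condition at limit $\beta$) hold coherently along the whole tree, not just locally. Everything else is either standard from the construction of syntactic categories or a direct reading of an axiom of Definition \ref{sfol} into categorical form.
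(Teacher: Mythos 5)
The paper itself disposes of this proposition in one line, deferring to the analogous verification for $\kappa$-coherent categories in \cite{espindola}, so your plan is being measured against that intended argument. Most of your outline is the right routine: the construction of $\mathcal{C}_{\theory}$, $\kappa$-small limits by concatenating $<\kappa$ contexts (regularity of $\kappa$ keeps the context small), images by existential quantification, complete subobject lattices via arbitrary disjunctions plus the adjoint functor theorem, and pullback-stability of unions from distributivity and Frobenius are all correct and essentially what the cited verification does. The problem is in the step you yourself identify as the heart of the matter. Your reduction ``replace each $C_f$ by the image of the transfinite composite $C_f\to C_\emptyset$'' does not deliver the hypotheses of rule $T$. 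First, it is internally inconsistent: a subobject of $\{\mathbf{y}_\emptyset.\phi_\emptyset\}$ lives in the fixed context $\mathbf{y}_\emptyset$, so it cannot simultaneously have canonical context $\mathbf{y}_f=\mathbf{y}_{f|_\beta}\cup\mathbf{x}_f$ with $\mathbf{x}_f$ fresh. Second, and more seriously, at a limit level $\beta$ rule $T$ requires $\phi_f\dashv\vdash\bigwedge_{\alpha<\beta}\phi_{f|_\alpha}$; if $\phi_f$ denotes the image of $C_f\to C_\emptyset$, this says that the image of the limit of the chain equals the meet of the images. That is not a formal consequence of the diagram being proper --- it is essentially an instance of property $T$ itself (for $\gamma=1$ it already contains the statement that a transfinite composite of covers is a cover), so the argument as written is circular.

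The repair is the standard graph trick, which keeps the expanding contexts rather than collapsing everything into $\mathrm{Sub}(C_\emptyset)$: inductively re-present each $C_g$ for $g\in\gamma^{\beta+1}$, $g|_\beta=f$, as $[\mathbf{y}_f\mathbf{x}_g.\phi_g]$ where $\phi_g$ is the formula defining the graph of $h_{g,f}$ over $\phi_f$, so that $h_{g,f}$ becomes the context projection and its image is $[\mathbf{y}_f.\exists\mathbf{x}_g\phi_g]$; at limit $\beta$ the limit object of the branch is then literally $[\bigcup_\alpha\mathbf{y}_{f|_\alpha}.\bigwedge_\alpha\phi_{f|_\alpha}]$, so the limit-stage hypothesis of rule $T$ holds by construction rather than by an unproved exactness claim. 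With that presentation the joint-covering hypothesis becomes $\phi_f\vdash_{\mathbf{y}_f}\bigvee_{g|_\beta=f}\exists\mathbf{x}_g\phi_g$ and the conclusion of rule $T$ is precisely the statement that the transfinite composites with respect to the bar are jointly covering. Your free-variable bookkeeping concern is then resolved automatically by choosing the fresh variables $\mathbf{x}_g$ disjoint along the tree, which is possible since each branch uses $<\kappa$ variables and the signature supplies $\kappa$ fresh variables of each sort.
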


\begin{proof} Straightforward verification similar to the corresponding statement for $\kappa$-coherent categories from \cite{espindola}.
\end{proof}

The syntactic categories for $\kappa$-geometric logic can be equipped with appropriate topologies in such a way that the corresponding sheaf toposes are conservative models of the corresponding theories. Given a $\kappa$-geometric category we can define the $\kappa$-geometric coverage, where the covering families are given by families of arrows $f_i: A_i \to A$ such that the union of their images is the whole of $A$ (in particular, the initial object $0$ is covered by the empty family). We can also find (see \cite{bj}) a conservative sheaf model given by Yoneda embedding into the sheaf topos obtained with the $\kappa$-coherent coverage. As proven in \cite{bj}, the embedding preserves  arbitrary unions and $\kappa$-limits. Moreover, we have the following completeness theorem in terms of models in $\kappa$-geometric Grothendieck toposes:

\begin{lemma}\label{shemb}
 Given a $\kappa$-geometric category $\mathcal{C}$ with the $\kappa$-geometric coverage $\tau$, Yoneda embedding $y: \mathcal{C} \to \mathcal{S}h(\mathcal{C}, \tau)$ is a conservative $\kappa$-geometric functor and $\mathcal{S}h(\mathcal{C}, \tau)$ is a $\kappa$-geometric category.
\end{lemma}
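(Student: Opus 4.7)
The plan is to verify three things in order: (a) that Yoneda lands in sheaves and is a $\kappa$-geometric functor, (b) that it is conservative, and (c) that $\mathcal{S}h(\mathcal{C},\tau)$ is itself a $\kappa$-geometric category, the last of which rests on property $T$.

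For (a), by construction the $\kappa$-geometric coverage consists of jointly covering families, so matching families for $y(C)$ glue uniquely by the stable-union property of $\mathcal{C}$, showing that $y(C)$ is a $\tau$-sheaf. Preservation of $\kappa$-small limits and arbitrary unions of subobjects by $y$ is precisely the content of \cite{bj} invoked just above; preservation of covers and of image factorizations follows formally, so $y$ is $\kappa$-geometric. For (b), $y$ is fully faithful, hence reflects monomorphisms, and in a geometric category an isomorphism is exactly a mono that is also a cover. Thus conservativity reduces to reflection of covers, which holds since $f$ is a cover in $\mathcal{C}$ iff $\{f\}$ is a $\tau$-cover iff $y(f)$ is an epimorphism in $\mathcal{S}h(\mathcal{C},\tau)$.

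For (c), a Grothendieck topos automatically has $\kappa$-small limits, a geometric structure, complete subobject lattices, and pullback-stable arbitrary unions, so the substantive point is property $T$. Given a proper diagram $F: S^{op} \to \mathcal{S}h(\mathcal{C},\tau)$ equipped with a bar $B$, a subobject of $F(\emptyset)$ is determined by its pullback along each probe $y(C) \to F(\emptyset)$. Pulling the entire proper diagram back along such a probe yields a proper diagram in the slice over $y(C)$; covering each node by representables and refining the transition maps accordingly, one obtains a proper diagram indexed by the same tree whose nodes are representables in $\mathcal{C}/C$. Property $T$ in $\mathcal{C}$ then guarantees that the transfinite composites along $B$ are jointly covering over $C$, and transporting this back via $y$ shows that the original transfinite composites jointly cover $y(C)$. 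Since this holds for every probe $y(C) \to F(\emptyset)$, the family is jointly covering in $\mathcal{S}h(\mathcal{C},\tau)$.

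The main obstacle will be the transfinite refinement in the last paragraph: at successor stages one must replace each node by a jointly covering family of representables while keeping the transition families jointly covering, and at limit stages one must ensure that the refined node is still the genuine transfinite composite of its predecessors. This bookkeeping is delicate but routine; an alternative, cleaner route is logical, observing that rule $T$ is sound in $\mathcal{C}$ by Lemma \ref{soundness} and Proposition \ref{catcomp}, and that the conservative $\kappa$-geometric functor $y$ transports this soundness to $\mathcal{S}h(\mathcal{C},\tau)$, where it translates directly into property $T$.
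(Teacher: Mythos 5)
Your main line of argument is essentially the paper's own: the paper likewise takes conservativity and preservation of $\kappa$-small limits and unions from \cite{bj}, and its entire proof consists of the remark that property $T$ in $\mathcal{S}h(\mathcal{C},\tau)$ is established by the same probe-by-representables-and-refine argument used for transfinite transitivity in \cite{espindola}. Your first route for (c) is exactly that argument, and your identification of the limit-stage refinement as the delicate point is accurate (note that limits of chains of length $<\kappa$ of representables are again representable since $\mathcal{C}$ has $\kappa$-small limits and $y$ preserves them, which is what makes the bookkeeping go through).

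However, the ``alternative, cleaner route'' you offer at the end does not work, and you should not fall back on it. Conservativity of $y$ means that validity of sequents is \emph{reflected} from $\mathcal{S}h(\mathcal{C},\tau)$ back to $\mathcal{C}$; it gives you no way to push the soundness of rule $T$ forward into the topos. More importantly, property $T$ for $\mathcal{S}h(\mathcal{C},\tau)$ quantifies over \emph{all} proper diagrams in the topos, including ones whose nodes are not in the image of $y$, whereas any statement transported along $y$ only concerns diagrams of representables. The same objection applies to trying to deduce the claim from Lemma \ref{soundness}: soundness of rule $T$ in a category is a consequence of that category having property $T$, not a means of establishing it for a larger category containing it. So the refinement argument of your third paragraph is not an avoidable inconvenience; it is the actual content of the lemma, and the only genuine proof on offer.
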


\begin{proof}
 The proof that property $T$ holds in $\mathcal{S}h(\mathcal{C}, \tau)$ is similar to the proof that transfinite transitivity holds in sheaf models of $\kappa$-coherent categories, as in \cite{espindola}.
\end{proof}

\begin{rmk}
 Lemma \ref{shemb} allows to give many examples of $\kappa$-geometric toposes. Starting with a category of size at most $\kappa$ with $\kappa$-small limits (for example, the syntactic category of any cartesian theory in $\mathcal{L}_{\kappa, \kappa}$ of cardinality at most $\kappa$), we can arbitrarily choose $\kappa$-many covering families and generate a topology such that the topos has property $T$: simply note that the condition that transfinite composites (up to $\kappa$) of covering families are again covering is a closure condition on our set of initial covering families, and therefore these transfinite composites can be added to the topology in at most $\kappa^+$ iterations. Conversely, if a Grothendieck topology is such that the topos has property $T$, it follows that the topology has a basis satisfying that transfinite composites of covering families in the basis belong to the basis. This method hence yields all possible $\kappa$-geometric toposes.
\end{rmk}

\begin{defs}
 A $\kappa$-Grothendieck topology is a topology generated by a basis with the property that transfinite composites (up to $\kappa$) of basic covering families are also basic covering families.
\end{defs}

\begin{rmk}
 It is easy to prove that any Grothendieck topology is an $\omega$-topology (see the proof in \cite{espindola} that the rule $TT_\omega$ is provable from the rest of the axioms). Therefore, a separable topos in the sense of Makkai and Reyes (see \cite{mr}) is an $\omega$-geometric topos.
\end{rmk}

To prove completeness with respect to $\mathcal{S}et$-valued models we need the notion of a transfinite Beth model, adapted from \cite{espindola} to our case:

\begin{defs}\label{bethmodelt}
 A Beth model for pure $\kappa$-geometric logic over $\Sigma$ is a quadruple $\mathcal{B}=(K, \leq, D, \Vdash)$, where $(K, \leq)$ is a tree of height $\kappa$ and with a set $B$ of branches (i.e., maximal chains in the partial order) each of size $\kappa$; $D$ is a set-valued functor on $K$, and the forcing relation $\Vdash$ is a binary relation between elements of $K$ and sentences of the language with constants from $\bigcup_{k \in K}D(k)$, defined recursively for formulas $\phi$ as follows. There is an interpretation of function and relation symbols in each $D(k)$; if $R_k \subseteq D(k)^{\lambda}$ is the interpretation in $D(k)$ of the $\lambda$-ary relation symbol $R$ in the language, we have $k \leq l \implies R_k(D_{kl}(\mathbf{c})) \subseteq R_l(\mathbf{c})$ for $\mathbf{c} \subseteq D_k$, and: 
 
 \begin{enumerate}
  \item $k \Vdash R(\mathbf{s}(\mathbf{d})) \iff \forall b \in B_k \exists l \in b \qquad (R_l(\mathbf{s}(D_{kl}(\mathbf{d}))))$
  \item $k \Vdash \bigwedge_{i<\gamma}\phi_i(\mathbf{d}) \iff k \Vdash \phi_i(\mathbf{d}) \text{ for every } i<\gamma$
  \item $k \Vdash \bigvee_{i<\gamma}\phi_i(\mathbf{d}) \iff \forall b \in B_k \exists l \in b \qquad (l \Vdash \phi_i(D_{kl}(\mathbf{d})) \text{ for some } i<\gamma)$
  \item $k \Vdash \exists \mathbf{x} \phi(\mathbf{x}, \mathbf{d}) \iff \forall b \in B_k \exists l \in b \qquad \exists \mathbf{e} \subseteq D(l) (l \Vdash \phi(\mathbf{e}, D_{kl}(\mathbf{d}))$
 \end{enumerate}

A Beth model for a $\kappa$-geometric theory \theory\ is a Beth model for $\kappa$-geometric logic forcing all the axioms of the theory and not forcing $\bot$. 
\end{defs}

We have now:

\begin{proposition}  
$\kappa$-first-order logic is sound for Beth models.
\end{proposition}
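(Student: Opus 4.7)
The plan is to verify soundness rule-by-rule against the system of Definition \ref{sfol}. First I would establish a monotonicity lemma by induction on formula complexity: if $k \leq l$ in $K$ and $k \Vdash \phi(\mathbf{d})$, then $l \Vdash \phi(D_{kl}(\mathbf{d}))$. This uses the fact that every branch through $l$ is a branch through $k$, so the ``for every branch, some node above forces $\ldots$''-style clauses for atomic formulas, disjunctions, and existentials transfer upward, while the clause for conjunction is pointwise and therefore immediate. With monotonicity in hand, the structural rules, the equality axioms, and the introduction/elimination rules for $\wedge$, $\vee$ and $\exists$ unwind directly from the clauses of Definition \ref{bethmodelt}. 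Small distributivity and Frobenius follow by combining monotonicity with the bar-style reading of $\vee$ and $\exists$; the key observation is that if on each branch through $k$ some node forces $\psi_i$ for some $i$ (possibly depending on the branch), then on each branch some node also forces $\phi \wedge \psi_i$ for some $i$, using that $\phi$ has already been forced at $k$ and propagates upward.

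The interesting case is the soundness of rule $T$. Assume the hypotheses: for every $\beta<\kappa$ and $f \in \gamma^{\beta}$, $\phi_f \vdash_{\mathbf{y}_f} \bigvee_{g|_{\beta}=f} \exists \mathbf{x}_{g}\, \phi_g$, and at limit $\beta$, $\phi_f \dashv\vdash_{\mathbf{y}_f} \bigwedge_{\alpha<\beta} \phi_{f|_\alpha}$. Suppose $k \Vdash \phi_{\emptyset}(\mathbf{d})$, and fix a branch $b \in B_k$ of the Beth tree through $k$. I would construct by transfinite recursion on $\beta<\kappa$ an increasing sequence $(l_\beta)_{\beta<\kappa}$ of nodes in $b$, together with $f_\beta \in \gamma^\beta$ satisfying $f_\alpha = f_\beta|_\alpha$ for $\alpha \leq \beta$, and witnessing tuples $\mathbf{e}_{f_{\beta+1}}$ for the variables $\mathbf{x}_{f_{\beta+1}}$, such that $l_\beta \Vdash \phi_{f_\beta}(\mathbf{e}_{f_\alpha}|_{\alpha\leq\beta}, D_{k,l_\beta}(\mathbf{d}))$. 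At successor stages, the hypothesis applied at $l_\beta$, together with the branch $b$, produces some $l' \in b$ with $l' \geq l_\beta$ forcing $\exists \mathbf{x}_g\, \phi_g$ for some immediate successor $g$ of $f_\beta$; a further application of the existential clause furnishes an $l_{\beta+1} \in b$ above $l'$ and a tuple $\mathbf{e}_g$ witnessing $\phi_g$. At limit stages $\beta$, take $l_\beta$ to be any node of $b$ above all previous $l_\alpha$ (available since $b$ has order type $\kappa$); monotonicity yields $l_\beta \Vdash \phi_{f_\alpha}$ for every $\alpha<\beta$, and the limit equivalence yields $l_\beta \Vdash \phi_{f_\beta}$, where $f_\beta = \bigcup_{\alpha<\beta} f_\alpha$.

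Since $(f_\beta)_{\beta<\kappa}$ is a branch of $\gamma^{<\kappa}$ and $B$ is a bar, there is a minimal $\delta < \kappa$ with $f_\delta \in B$. At $l_\delta \in b$, the accumulated interpretation of the variables $\mathbf{x}_{f_\delta|_{\beta+1}}$ for $\beta<\delta$ witnesses the conjunction $\bigwedge_{\beta<\delta}\phi_{f_\delta|_{\beta+1}}$, so $l_\delta$ forces the disjunct of the conclusion of rule $T$ indexed by $f = f_\delta \in B$. Since $b \in B_k$ was arbitrary, we conclude $k \Vdash \bigvee_{f \in B}\exists_{\beta<\delta_f}\mathbf{x}_{f|_{\beta+1}}\bigwedge_{\beta<\delta_f}\phi_{f|_{\beta+1}}$. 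The main obstacle is the bookkeeping for this transfinite recursion: one must check that the successively chosen witness tuples $\mathbf{e}_{f_{\beta+1}}$ combine coherently into a single assignment for the block existential in the conclusion, that the hypothesis $\mathbf{x}_{f|_{\beta+1}} \cap FV(\phi_{f|_\beta}) = \emptyset$ prevents variable clashes, and that at limit stages one can always find an $l_\beta \in b$ above all previously chosen nodes, which is where the assumption that branches of the Beth tree have length exactly $\kappa$ is essential.
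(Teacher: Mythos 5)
Your overall strategy---establish a persistence property by induction on formula complexity and then verify the axioms and rules one by one, handling rule $T$ by a branchwise transfinite recursion---is the same as the paper's, and your treatment of rule $T$ is correct and far more explicit than the paper's one-line remark: the conclusion of that rule is a disjunction, whose forcing clause is already bar-shaped, so producing on each branch $b \in B_k$ a node forcing some disjunct is exactly what is required, and your handling of limit stages (regularity of $\kappa$ plus the branch having length $\kappa$) and of the coherence of the witness tuples under the freshness conditions is right.

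There is, however, one concrete omission. The monotonicity lemma you prove is only half of what is needed: the elimination rules for $\vee$ and $\exists$ require the converse \emph{local character} property, namely that $k \Vdash \theta(\mathbf{d})$ follows whenever every branch $b \in B_k$ contains a node $l$ with $l \Vdash \theta(D_{kl}(\mathbf{d}))$. Indeed, from $k \Vdash \bigvee_{i} \phi_i(\mathbf{d})$ and the premises $\phi_i \vdash_{\mathbf{x}} \theta$ you obtain only that each branch through $k$ contains \emph{some} node forcing $\theta$; to conclude $k \Vdash \theta(\mathbf{d})$ you must pass back down to $k$, and this does not ``unwind directly from the clauses'' when $\theta$ is a conjunction, whose clause is pointwise rather than bar-shaped (the same issue arises for the downward direction of the bidirectional existential rule). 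This is precisely the biconditional $k \Vdash \phi(\mathbf{c}) \iff \forall b \in B_k\, \exists l \in b\; (l \Vdash \phi(D_{kl}(\mathbf{c})))$ that the paper isolates as the key part of the proof. The repair is routine---the atomic, disjunction and existential cases are immediate from the shape of their clauses, and the conjunction case uses the inductive hypothesis---but the property must be stated and proved alongside monotonicity rather than assumed.
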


\begin{proof}
The key part of the proof is to note that the following property holds: for any $\kappa$-geometric formula $\phi(\mathbf{x})$ and any node $k$ in the Beth model, we have $k \Vdash \phi(\mathbf{c}) \iff \forall b \in B_k \exists l \in b \qquad (l \Vdash \phi(D_{kl}(\mathbf{c})))$. This in turn can be easily proved by induction on the complexity of $\phi$. Using now this property, it is easy to check the validity of all axioms and rules of $\kappa$-geometric logic.
\end{proof}

We will need the following technical lemma, which corresponds to the canonical well-ordering of $\kappa \times \kappa$ from \cite{jechst}:

\begin{lemma}\label{dwo}
 For every cardinal $\kappa$ there is a well-ordering $f: \kappa \times \kappa \to \kappa$ with the property that $f(\beta, \gamma) \geq \gamma$.
\end{lemma}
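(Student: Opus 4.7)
The plan is to use the standard G\"odel well-ordering on $\kappa\times\kappa$. Define $(\alpha,\beta)\prec(\gamma,\delta)$ to hold iff either $\max(\alpha,\beta)<\max(\gamma,\delta)$, or the two maxima are equal and $(\alpha,\beta)$ precedes $(\gamma,\delta)$ lexicographically. A routine check shows $\prec$ is a well-ordering of $\kappa\times\kappa$: a least element of any nonempty $S\subseteq\kappa\times\kappa$ is obtained by first picking the least value of $\max$ attained on $S$, then the least first coordinate among pairs realizing that value, then the least second coordinate.

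Let $f$ be the unique order-isomorphism of $(\kappa\times\kappa,\prec)$ onto its order type $\theta$. The main step is to show that $\theta=\kappa$, so that $f$ takes values in $\kappa$. This is proved by transfinite induction on the infinite cardinals $\lambda\leq\kappa$, verifying that the initial segment $\lambda\times\lambda$ has order type exactly $\lambda$. For the inductive step, note that for any $(\alpha,\beta)\in\lambda\times\lambda$ the $\prec$-predecessors lie in $\nu\times\nu$ with $\nu=\max(\alpha,\beta)+1<\lambda$; choosing an infinite cardinal $\mu$ with $|\nu|\leq\mu<\lambda$ and invoking the induction hypothesis $|\mu\times\mu|=\mu$, the set of predecessors has cardinality less than $\lambda$ and hence its order type is an ordinal below $\lambda$. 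Since $\lambda\times\lambda$ has at least $\lambda$ elements (e.g.\ $\{0\}\times\lambda$), its order type must be exactly $\lambda$; taking $\lambda=\kappa$ gives $\theta=\kappa$.

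For the inequality $f(\beta,\gamma)\geq\gamma$: whenever $\delta<\gamma$ one has $\max(0,\delta)=\delta<\gamma\leq\max(\beta,\gamma)$, so $(0,\delta)\prec(\beta,\gamma)$. The map $\delta\mapsto(0,\delta)$ is thus an order-preserving embedding of the ordinal $\gamma$ into the $\prec$-initial segment below $(\beta,\gamma)$, whose order type is by definition $f(\beta,\gamma)$; hence $f(\beta,\gamma)\geq\gamma$.

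The only nontrivial ingredient is the inductive step showing $\theta=\kappa$, which is essentially Hessenberg's theorem $\lambda\cdot\lambda=\lambda$ for infinite $\lambda$. This causes no real difficulty, but rather than cite it as a black box it is cleaner to fold it into the same induction, since the usual proof of Hessenberg's theorem goes via precisely this G\"odel pairing.
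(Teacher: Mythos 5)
Your proof is correct and follows essentially the same route as the paper: both use the canonical (G\"odel) well-ordering of $\kappa\times\kappa$ by maximum and then lexicographically, the paper simply writing down the resulting rank function explicitly and citing Jech's Theorem~3.5 where you reprove the order-type computation from scratch. The only cosmetic points are that your inductive step (choosing an infinite $\mu$ with $|\nu|\le\mu<\lambda$) needs the trivial separate remark for the base case $\lambda=\omega$, where predecessor sets are finite, and that, as in the paper, $\kappa$ is implicitly assumed infinite.
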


\begin{proof}
 We define $f$ by induction on $\max (\beta, \gamma)$ as follows:
 
 $$f(\beta, \gamma)=\begin{cases} \sup\{f(\beta', \gamma')+1: \beta', \gamma'<\gamma\}+\beta  & \mbox{if }  \beta<\gamma  \\ \sup\{f(\beta', \gamma')+1: \beta', \gamma'<\beta\}+\beta+\gamma & \mbox{if } \gamma \leq \beta \end{cases}$$
 
\noindent which satisfies the required property (see \cite{jechst}, Theorem 3.5).
\end{proof}

We have now:

\begin{thm}\label{shcomp}
 Let $\kappa$ be a regular cardinal such that $\kappa^{<\kappa}=\kappa$. Then any $\kappa$-geometric theory of cardinality at most $\kappa$ has a Beth model.
\end{thm}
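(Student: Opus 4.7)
The plan is to build the Beth model by a Henkin-style tree construction over $\kappa^{<\kappa}$ adapted to infinitary $\kappa$-geometric logic. First I would enumerate, using the hypothesis $\kappa^{<\kappa} = \kappa$ together with the canonical well-ordering of $\kappa \times \kappa$ from Lemma \ref{dwo}, a scheduling of all the ``tasks'' that must be discharged along each branch of length $\kappa$: for each disjunctive consequence $\bigvee_{i<\gamma}\phi_i(\mathbf{c})$ of the theory-at-a-node and each existential consequence $\exists \mathbf{x}\,\phi(\mathbf{x},\mathbf{c})$, a level at which it gets split respectively witnessed. The tree $K \subseteq \kappa^{<\kappa}$ then has at each node $f$ a consistent $\kappa$-geometric extension $\theory_f$ of $\theory$ in a language expanded by a set $D(f)$ of Henkin constants with $|D(f)|<\kappa$, arranged so that (i) if the task scheduled at $f$ is a disjunction $\bigvee_{i<\gamma}\phi_i(\mathbf{c})$ provable from $\theory_f$, the immediate successors are indexed by $\gamma$ with $\phi_i(\mathbf{c}) \in \theory_{f\frown\langle i\rangle}$; (ii) if it is an existential, some successor introduces fresh $\mathbf{c}$ and adds $\phi(\mathbf{c})$; (iii) at limit levels $\beta<\kappa$ we take unions along the branch. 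Regularity of $\kappa$ and the fact that every $\kappa$-geometric derivation uses $<\kappa$ formulas guarantee consistency is preserved through successor and limit steps.

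Next I would define the Beth model on $K$ by letting $D(f)$ be the set of closed terms in the language at $f$ quotiented by provable equality in $\theory_f$, and interpreting each $\lambda$-ary relation $R$ by $R_f(\mathbf{c}) \iff \theory_f \vdash R(\mathbf{c})$; monotonicity along $f \leq g$ is built in by the extension property of the $\theory_f$. The heart of the argument is a truth lemma stating that for every $\kappa$-geometric formula $\phi(\mathbf{x})$ and every $f \in K$,
\[
f \Vdash \phi(\mathbf{c}) \iff \theory_f \vdash \phi(\mathbf{c}).
\]
The ``$\Leftarrow$'' direction and the atomic, conjunction, existential and disjunction cases on ``$\Rightarrow$'' are straightforward from the construction once one shows the following crucial property $(\star)$: if for every branch $b$ through $f$ there exists $l \in b$ with $\theory_l \vdash \phi(\mathbf{c})$, then already $\theory_f \vdash \phi(\mathbf{c})$.

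Property $(\star)$ is exactly where rule $T$ is used, and this is the main obstacle. Given such a branch-wise cover of $f$ by provability, let $B$ be the bar of minimal nodes $g$ above $f$ with $\theory_g \vdash \phi(\mathbf{c})$, and to each node $h$ on the path from $f$ to some $g \in B$ associate the formula $\phi_h$ obtained as the conjunction of the finitary ``data'' added to $\theory_f$ to reach $\theory_h$ (disjuncts chosen at disjunction-nodes together with witnessing atomic formulas at existential-nodes), quantified existentially over the fresh constants introduced. By the construction of the tree, the hypotheses of rule $T$ are satisfied: at a successor step corresponding to a disjunction $\bigvee_{i<\gamma}\psi_i$ the parent's formula entails the join of the children's, and at a successor for an existential the child's formula is an instance providing the witness; at limit nodes the conjunction clause holds by the definition of $\theory_h$ as a union. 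The conclusion of rule $T$ then produces precisely a derivation of $\phi(\mathbf{c})$ from $\theory_f$, by collapsing the bar $B$ to a single disjunction of existentially-quantified conjunctions at $f$ and using the inductive hypothesis (through the cut rule) at each $g \in B$.

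Finally, taking the root node $\emptyset$ with $\theory_\emptyset = \theory$, the axioms of $\theory$ are forced at the root by the truth lemma, and $\bot$ is not forced there because $\theory$ is consistent (otherwise there is nothing to prove). The technical bookkeeping — that the schedule reaches every disjunctive/existential consequence along every branch, and that languages stay of size $\leq \kappa$ throughout — is the routine part, made possible precisely by $\kappa^{<\kappa}=\kappa$ and Lemma \ref{dwo}; the conceptual work is the translation via rule $T$ of the bar-covering Beth forcing of disjunctions and existentials into genuine syntactic derivability in $\theory_f$.
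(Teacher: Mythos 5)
Your plan is a genuinely different realization of the same underlying idea. The paper never manipulates theories attached to nodes: it builds the tree directly inside the syntactic category $\mathcal{C}_{\mathbb{T}}$ (the root is the terminal object, successor levels are obtained by pulling back scheduled basic covering families, limit nodes are limits along branches), defines the node structures from the sheaf semantics of $\mathrm{Sh}(\mathcal{C}_{\mathbb{T}},\tau)$, and proves by induction that Beth forcing coincides with sheaf forcing for subformula instances of the axioms. Your property $(\star)$ appears there as the statement that the legs of the subtree over a node down to any bar form a basic covering family, which is exactly the exactness property $T$ of the topology already secured by Proposition \ref{catcomp} and Lemma \ref{shemb}. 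What the categorical route buys is that the ``data added along a path'' is literally the formula presenting the object $F(h)$, so every provability statement is a sequent over the fixed theory $\mathbb{T}$ and rule $T$ applies verbatim; what your Henkin-style route buys is independence from the syntactic-category machinery, at the cost of two extra syntactic lemmas.

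Those two points are where your write-up has real gaps. First, the claim that consistency of the $\mathbb{T}_f$ is preserved is false at disjunction-splitting steps (adding a single disjunct of a provable disjunction can yield an inconsistent theory), and the supporting claim that every derivation uses fewer than $\kappa$ formulas is also false (rule $T$ and the disjunction rule have up to $\kappa$ many, respectively arbitrarily many, premises). Fortunately node-wise consistency is not needed: inconsistent nodes simply force everything, and only non-forcing of $\bot$ at the root matters, which follows from $(\star)$ applied to $\bot$. Second, and more seriously, both to verify the premises of rule $T$ at each node of the subtree and to cut its conclusion against the bar, you must pass from ``$\mathbb{T}_f\cup\Delta_h\vdash\phi(\mathbf{c})$'' to the sequent $\bigwedge\Delta_h\vdash\phi$ over $\mathbb{T}_f$ with the Henkin constants replaced by variables; this is a deduction theorem together with a constants lemma for the infinitary calculus, to be proved by induction on (well-founded, $\kappa$-branching) derivations, checking in particular that it commutes with rule $T$. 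It is provable, but it is not routine bookkeeping, it is nowhere stated in your plan, and avoiding it is precisely why the paper routes the whole construction through the syntactic category. You should also restrict the truth lemma to instances of subformulas of the axioms (the class of all $\kappa$-geometric formulas is too large to schedule, since disjunctions of arbitrary arity are allowed) and pad the tree to be uniformly branching so that rule $T$ applies as literally stated.
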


\begin{proof}
 Consider the syntactic category $\mathcal{C}_{\theory}$ of the theory and its conservative embedding in the topos of sheaves with the $\kappa$-geometric coverage, $\mathcal{C}_{\theory} \to \mathcal{S}h(\mathcal{C}_{\theory}, \tau)$. By assumption, the cardinality of the set $S$ of antecedents and consequents of axioms of the theory is at most $\kappa$. We will construct a Beth model of height $\kappa$ by transfinite recursion, where the underlying sets of the nodes will be specified as follows. We will build a contravariant functor $F$ from the underlying tree of height $\kappa$ to the syntactic category, defined recursively on the levels of the tree; the underlying domain corresponding to a node $q$ will be a subset of the set of arrows from $F(q)$ to the object $[x, \top]$ in the syntactic category, and the function between the underlying set of a node $q$ and that of its successor $p$ for $f: q\to p$ is given by composition with the arrow $F(f)$.

Let the image by $F$ of the root of the underlying tree be assigned the terminal object $1$, and choose as the underlying domain the set of all constants symbols $c: 1 \to [x, \top]$ appearing in subformulas of $S$. Suppose now that the object $A=F(q)$ corresponding to a node $q$ in the tree has been defined and its underlying set has been specified. Consider the set of basic covering families over $A$ (which are given by jointly cover sets of arrows of cardinality less than $\kappa$) that witness that some formula in $S$ is forced by $A$ at a given tuple of its underlying domain. That is, if an antecedent or consequent $\eta(\mathbf{x})$ is a (nonempty) disjunction of the form $\bigvee_{i<\gamma}\exists \mathbf{x}_{0}...\mathbf{x}_{n_i} ... \psi_i(\mathbf{x}_0, ...,\mathbf{x}_{n_i}, ..., \mathbf{x})$, and $A \Vdash \eta(\boldsymbol{\beta})$, we include in the set of coverings one of the form $l_j: C_j \to A$, where for each $j$ we have $C_j \Vdash \psi_{i_j}(\boldsymbol{\beta_0^j}, ..., \boldsymbol{\beta_{n_{i_j}}^j}, ..., \boldsymbol{\beta} l_j)$ for some $i_j$ and some $\boldsymbol{\beta_0^j}, ..., \boldsymbol{\beta_{n_{i_j}}^j}, ...$. In case $\eta$ is $\bot$, or $\eta$ is a conjunctive subformula, or $A \nVdash \eta(\boldsymbol{\beta})$ we just consider the identity arrow as a cover. By considering identity arrows if needed, we can also assume that the set of covering families just specified has cardinality $\kappa$.

To construct the functor $F$ by recursion, start with a well-ordering $f: \kappa \times \kappa \to \kappa$ as in Lemma \ref{dwo}, i.e.,  with the property that $f(\beta, \gamma) \geq \gamma$. We describe by an inductive definition how the tree obtained as the image of the functor $F$ is constructed.
 
Suppose therefore that the tree is defined for all levels $\lambda<\mu$ and the covering families over objects corresponding to nodes already defined have cardinality $\kappa$; we show how to define the nodes of level $\mu$. Assume first that $\mu$ is a successor ordinal $\mu=\alpha+1$, and let $\alpha=f(\beta, \gamma)$. Since by hypothesis $f(\beta, \gamma) \geq \gamma$, the nodes $\{p_i\}_{i<m_{\gamma}}$ at level $\gamma$ are defined. Consider the morphisms $g_{ij}^{\alpha}$ over $p_i$ assigned to the paths from each of the nodes $p_i$ to the nodes of level $\alpha$. To define the nodes at level $\alpha+1$, take then the $\beta-th$ covering family over each $p_i$ and pull it back along the morphisms $g_{ij}^{\alpha}$. This produces covering families over each node at level $\alpha$, whose domains are then the nodes of level $\alpha+1$. The underlying domains of such nodes are then formed by considering the elements coming from their predecessors and we add as well the elements coming from witnesses from the $\beta-th$ covering family over the corresponding $p_i$. Suppose now that $\mu$ is a limit ordinal. Then each branch of the tree of height $\mu$ already defined determines a diagram, whose limit is defined to be the node at level $\mu$ corresponding to that branch; its underlying domain is formed by considering all elements coming from predecessor nodes. It is a consequence of the recursion that the underlying domain of each node has cardinality at most $\kappa$, and therefore, since $\kappa^{<\kappa}=\kappa$, the set of covering families over any given object $A$ defined at the inductive step has cardinality at most $\kappa$. By adding identity covers to each set we can assume without loss of generality that it is $\kappa$.
 
 The tree obtained as the image of $F$ has height $\kappa$, and clearly, the morphisms assigned to the paths from any node $p$ till the nodes of level $\alpha$ in the subtree over $p$ form a basic covering family of $p$ because of the transfinite transitivity property. Define now a partial Beth model $B$ over this tree as follows. There is an interpretation of the function symbols in the subset underlying each node which corresponds to composition with the interpretation in the category of the corresponding function symbol. For relations $R$ (including equality), we set by definition $R_q(\mathbf{s}(\boldsymbol{\alpha}))$ if and only if $q$ forces $R(\mathbf{s}(\boldsymbol{\alpha}))$ in the sheaf semantics of the topos, that is, if $q \Vdash R(\mathbf{s}(\boldsymbol{\alpha}))$ (we identify the category with its image through Yoneda embedding). We have now:\\
 
$Claim:$ For every node $p$, every tuple $\boldsymbol{\alpha}$ and every formula $\phi \in S$, $p \Vdash \phi(\boldsymbol{\alpha})$ if and only if $p \Vdash_B \phi(\boldsymbol{\alpha})$, , where $\Vdash_B$ is the forcing in the Beth model.\\

The proof goes by induction on $\phi$.
 
 \begin{enumerate}
  \item If $\phi$ is atomic, the result is immediate by definition of the underlying structures on each node.
  
  \item If $\phi=\bigwedge_{i<\gamma}\psi_i$, the result follows easily from the inductive hypothesis, since we have $p \Vdash \bigwedge_{i<\gamma}\psi_i(\boldsymbol{\alpha})$ if and only if $p \Vdash \psi_i(\boldsymbol{\alpha})$ for each $i<\gamma$, if and only if $p \Vdash_B \psi_i(\boldsymbol{\alpha})$ for each $i<\gamma$, if and only if $p \Vdash_B \bigwedge_{i<\gamma}\psi_i(\boldsymbol{\alpha})$.
  
  \item Suppose $\phi= \bigvee_{i<\gamma}\exists \mathbf{x}_{0}...\mathbf{x}_{n_i} ... \psi_i(\mathbf{x}_0, ...,\mathbf{x}_{n_i}, ..., \mathbf{x})$. If $p \Vdash \phi(\boldsymbol{\beta})$, then there is a basic covering family $\{f_i: A_i \to p\}_{i<\lambda}$ that appears at some point in the well-ordering, such that for each $i<\lambda$, $A_i \Vdash \psi_{i_j}(\boldsymbol{\beta_0^j}, ..., \boldsymbol{\beta_{n_{i_j}}^j}, ..., \boldsymbol{\beta} l_j)$ for some $i_j<\gamma$ and some $\boldsymbol{\beta_0^j}, ..., \boldsymbol{\beta_{n_{i_j}}^j}, ...: A_i : \to [\mathbf{x}, \top]$. Now this covering family is pulled back along all paths $g_j$ of a subtree to create the nodes of a certain level of the subtree over $p$. Hence, every node $m_j$ in such a level satisfies $m_j \Vdash \psi_{i_j}(\boldsymbol{\beta_0^{'j}}, ..., \boldsymbol{\beta_{n_{i_j}}^{'j}}, ..., \boldsymbol{\beta} l_j'))$ for some for some $i_j$ and some $\boldsymbol{\beta_0^{'j}}, ..., \boldsymbol{\beta_{n_{i_j}}^{'j}}, ...$. By inductive hypothesis, $m_j \Vdash_B \psi_{i_j}(\boldsymbol{\beta_0^{'j}}, ..., \boldsymbol{\beta_{n_{i_j}}^{'j}}, ..., \boldsymbol{\beta} l_j'))$, and hence we have $p \Vdash_B \phi(\boldsymbol{\beta})$.
  
  Conversely, if $p \Vdash_B \phi(\boldsymbol{\beta})$, there is a bar over the subtree over $p$ such that for every minimal node $m_j$ there one has $m_j \Vdash_B \psi_{i_j}(\boldsymbol{\beta_0^j}, ..., \boldsymbol{\beta_{n_{i_j}}^j}, ..., \boldsymbol{\beta} f_j)$ for some $i_j<\gamma$ and some $\boldsymbol{\beta_0^j}, ..., \boldsymbol{\beta_{n_{i_j}}^j}, ...: m_j : \to [\mathbf{x}, \top]$, so by inductive hypothesis $m_j \Vdash \psi_{i_j}(\boldsymbol{\beta_0^j}, ..., \boldsymbol{\beta_{n_{i_j}}^j}, ..., \boldsymbol{\beta} f_j)$. Since $\{f_j: m_j \to p\}$ is, by construction, a basic covering family, we must have $p \Vdash \phi(\boldsymbol{\beta})$.
   
  \end{enumerate}
 
\end{proof}

\begin{thm}\label{kgcomp}
If $\kappa$ is a regular cardinal such that $\kappa^{<\kappa}=\kappa$, $\kappa$-geometric theories of cardinality at most $\kappa$ are complete with respect to \Sets-valued models.
\end{thm}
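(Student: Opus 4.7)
The plan is to extract a $\mathcal{S}et$-valued counter-model from the Beth model produced by Theorem \ref{shcomp}. Suppose a sequent $\phi \vdash_{\mathbf{x}} \psi$ fails to be derivable from $\theory$. By Lemma \ref{shemb} together with Theorem \ref{shcomp}, there is a Beth model $\mathcal{B}=(K,\leq,D,\Vdash)$ of $\theory$ and a tuple $\boldsymbol{\alpha}$ at some node with $\boldsymbol{\alpha}\Vdash\phi(\boldsymbol{\alpha})$ but $\boldsymbol{\alpha}\nVdash\psi(\boldsymbol{\alpha})$. By the clause for disjunction/existential in Definition \ref{bethmodelt}, the failure of forcing at a node entails the existence of a branch $b$ in the tree along which $\psi(\boldsymbol{\alpha})$ is not forced at \emph{any} level.

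The first step is to fix such a branch $b$ and form the $\mathcal{S}et$-valued structure $M_b$ as the colimit $\colimit_{k\in b} D(k)$ of the underlying domains along the $\kappa$-chain $b$, with function symbols interpreted by the induced colimit maps and a relation symbol $R$ interpreted by $R^{M_b}(\boldsymbol{\beta})$ iff $l\Vdash R(\boldsymbol{\beta'})$ for some $l\in b$ with representative $\boldsymbol{\beta'}$. Since $b$ has length $\kappa$, each tuple of length ${<}\kappa$ from $M_b$ is represented at some single node of $b$, and the cardinality of $M_b$ is at most $\kappa$.

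The central step is a transfer lemma: for every $\kappa$-geometric formula $\theta(\mathbf{x})$ and every tuple $\boldsymbol{\gamma}$ in $M_b$ with representative $\boldsymbol{\gamma'}$ at level $k\in b$,
\[
M_b \models \theta(\boldsymbol{\gamma}) \iff \exists l\in b,\ l\geq k,\ l\Vdash \theta(\boldsymbol{\gamma'}).
\]
This is proved by induction on $\theta$. The atomic case is by definition; conjunctions of size $\gamma<\kappa$ use regularity of $\kappa$ to find a common upper bound in $b$ witnessing all conjuncts; disjunctions $\bigvee_{i<\gamma}\theta_i$ use, in one direction, the disjunction axiom and the inductive hypothesis, and in the other direction the fact that if $l\Vdash\bigvee_{i<\gamma}\theta_i$ for $l\in b$, then by the Beth clause applied to the branch $b$ itself some $\theta_i$ is forced at a later node of $b$; existentials are handled analogously, with witnesses at some later node of $b$ living by construction in $M_b$. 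Applying the lemma to $\phi$ and $\psi$ yields $M_b\models\phi(\boldsymbol{\alpha})$ and $M_b\not\models\psi(\boldsymbol{\alpha})$, producing the required $\mathcal{S}et$-valued counter-model.

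The main obstacle is the verification of the transfer lemma in the $\kappa$-infinitary cases. The regularity assumption $\kappa^{<\kappa}=\kappa$ and the length $\kappa$ of each branch cooperate to make the conjunction and existential cases work: collecting $<\kappa$ witnessing levels on $b$ always leaves us with a level still in $b$. The subtlety is the disjunction clause, which in the Beth definition only guarantees forcing eventually along every branch; here one must use that the chosen branch $b$ is itself a branch of the tree, so the clause specializes to it, thus yielding an honest witness at a single node. Once this lemma is established, completeness follows at once.
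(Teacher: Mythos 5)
Your proposal is correct and follows essentially the same route as the paper: the paper's proof likewise roots the Beth model of Theorem \ref{shcomp} at an object forcing $\phi(\boldsymbol{\alpha})$ (with $\phi,\psi$ added to $S$), forms the directed colimit $\mathbf{D_b}$ of the underlying domains along each branch, and establishes your transfer lemma verbatim as its Claim, by the same induction on formulas. The only difference is packaging: you argue contrapositively from a single branch on which $\psi(\boldsymbol{\alpha})$ is never forced, while the paper argues directly that validity makes $\psi(\boldsymbol{\alpha})$ forced somewhere on every branch and then uses property $T$ (the bar forming a covering family) to pull the forcing back to the root --- the two directions are equivalent, the covering step being absorbed in your version into the equivalence of sheaf and Beth forcing at the root.
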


\begin{proof}
 It is enough to prove that every object in the sheaf model forcing the antecedent $\phi(\boldsymbol{\alpha})$ of a valid sequent $\phi \vdash_{\mathbf{x}} \psi$ also forces the consequent $\psi(\boldsymbol{\alpha})$ for every tuple $\boldsymbol{\alpha}$ in the domain. Construct a Beth model over a tree as above but taking as the root of the tree a given object forcing $\phi(\boldsymbol{\alpha})$ and including in the set of formulas $S$ also the $\phi$ and $\psi$; as the underlying domain we include the elements $A \to 1 \to [x, \top]$ coming from the set of constants of $S$ and the tuple $\boldsymbol{\alpha}$. For each branch $\mathbf{b}$ of the tree, consider the directed colimit $\mathbf{D_b}$ of all the underlying structures in the nodes of the branch, with the corresponding functions between them. Such a directed colimit is a structure under the definitions:
 
 \begin{enumerate}
 \item for each function symbol $f$, we define $f(\overline{x_0}, ..., \overline{x_{\lambda}}, ...)=\overline{f(x_0, ..., x_{\lambda}, ...)}$ for some representatives $x_i$ of $\overline{x_i}$; in particular, constants $\mathbf{c}$ are interpreted as $\overline{\mathbf{c}}=\overline{c_0}, ..., \overline{c_{\lambda}}, ...$;
 \item for each relation symbol $R$ we define $R(\overline{x_0}, ..., \overline{x_{\lambda}}, ...) \iff R(x_0, ..., x_{\lambda}, ...)$ for some representatives $x_i$ of $\overline{x_i}$.
 \end{enumerate} 
 
  It is easy to check, using the regularity of $\kappa$, that the structure is well defined and that the choice of representatives is irrelevant. We will show that such a structure is a (possible exploding) positive\footnote{By positive we mean that $\bot$ is not necessarily interpreted as the initial subobject, and by exploding, that it is interpreted as the terminal subobject.} $\kappa$-geometric model of the theory satisfying $\phi(\overline{\boldsymbol{\alpha}})$. Indeed, we have the following:
 
 $Claim:$ Given any $\kappa$-geometric formula $\phi(x_0, ..., x_{\lambda}, ...) \in S$, we have $\mathbf{D_b} \vDash \phi(\overline{\alpha_0}, ..., \overline{\alpha_{\lambda}}, ...)$ if and only if for some node $n$ in the path $\mathbf{b}$, the underlying structure $C_n$ satisfies $C_n \Vdash \phi(\alpha_0, ..., \alpha_{\lambda}, ...)$ for some representatives $\alpha_i$ of $\overline{\alpha_i}$.
 
 The proof of the claim is by induction on the complexity of $\phi$. 
 
 \begin{enumerate}
  \item If $\phi$ is $R(t_0, ..., t_{\lambda}, ...)$ or $s=t$ for given terms $t_i, s, t$, the result follows by definition of the structure. 
  
  \item If $\phi$ is of the form $\bigwedge_{i<\gamma} \theta_i$ the result follows from the inductive hypothesis: $\theta_i$ is forced at some node $n_i$ in the path $\mathbf{b}$, and therefore $\bigwedge_{i<\gamma} \theta_i$ will be forced in any upper bound of $\{n_i: i<\gamma\}$ (here we use the regularity of $\kappa$).
  
  \item If $\phi$ is of the form $\bigvee_{i<\gamma} \theta_i$ and $\mathbf{D_b} \vDash \phi(\overline{\alpha_0}, ..., \overline{\alpha_s}, ...)$, then we can assume that $\mathbf{D_b} \vDash \theta_i(\overline{\alpha_0}, ..., \overline{\alpha_s}, ...)$ for some $i<\gamma$, so that by inductive hypothesis we get $C_n \Vdash \phi(\alpha_1, ..., \alpha_s, ...)$ for some node $n$ in $\mathbf{b}$. Conversely, if $C_n \Vdash \phi(\alpha_0, ..., \alpha_s, ...)$ for some node $n$ in $\mathbf{b}$, by definition of the forcing there is a node $m$ above $n$ in $\mathbf{b}$ and a function $f_{nm}: D_n \to D_m$ for which $C_m \Vdash \theta_i(f_{nm}(\alpha_0), ..., f_{nm}(\alpha_s), ...)$ for some $i<\gamma$, so that by inductive hypothesis we get $\mathbf{D_b} \vDash \phi(\overline{\alpha_0}, ..., \overline{\alpha_s}, ...)$.
  
  \item Finally, if $\phi$ is of the form $\exists \mathbf{x} \psi(\mathbf{x}, x_0, ..., x_s, ...)$ and $\mathbf{D_b} \vDash \phi(\overline{\alpha_0}, ..., \overline{\alpha_s}, ...)$, then $\mathbf{D_b} \vDash \psi(\boldsymbol{\overline{\alpha}}, \overline{\alpha_0}, ..., \overline{\alpha_s}, ...)$ for some $\boldsymbol{\overline{\alpha}}$, and then $C_n \Vdash \psi(\boldsymbol{\alpha}, \alpha_0, ..., \alpha_s, ...)$ for some node $n$ by inductive hypothesis. Conversely, if $C_n \Vdash \phi(\alpha_0, ..., \alpha_s, ...)$ for some node $n$ in $\mathbf{b}$, then by definition of the forcing there is a node $m$ above $n$ in $\mathbf{b}$ and a function $f_{nm}: D_n \to D_m$ for which $C_m \Vdash \psi(f_{nm}(\boldsymbol{\alpha}), f_{nm}(\alpha_0), ..., f_{nm}(\alpha_s), ...)$, which implies that $\mathbf{D_b} \vDash \psi(\boldsymbol{\overline{\alpha}}, \overline{\alpha_0}, ..., \overline{\alpha_s}, ...)$ and hence $\mathbf{D_b} \vDash \phi(\overline{\alpha_0}, ..., \overline{\alpha_s}, ...)$. 
 \end{enumerate}

 Since $\psi(\overline{\boldsymbol{\alpha}})$ is satisfied in all $\kappa$-geometric models of the theory satisfying $\phi(\overline{\boldsymbol{\alpha}})$, it is satisfied in all models of the form $\mathbf{D_b}$ (even if the structure $\mathbf{D_b}$ is exploding). Hence, $\psi(\boldsymbol{\alpha})$ is forced at a certain node of every branch of the tree. Because these nodes form a basic covering family by property $T$, $\psi(\boldsymbol{\alpha})$ is therefore forced at the root, as we wanted to prove. 
\end{proof}

\begin{rmk}
Theorem \ref{kgcomp} is best possible in terms of the cardinality of the theories. Indeed, given an $\kappa^+$-Aronszajn tree (which exists if $\kappa^{<\kappa}=\kappa$, according to \cite{specker}), the theory of a cofinal branch there is obviously geometric and of cardinality $\kappa^+$, but although consistent, it has no model.
\end{rmk}

\section{The $\kappa$-classifying topos of a $\kappa$-geometric theory}

We are now ready to prove the following:

\begin{thm}\label{cltop}
If $\kappa$ is a regular cardinal, any $\kappa$-geometric theory \theory\ has a $\kappa$-classifying topos $\mathcal{B}(\theory)$, defined as a $\kappa$-geometric Grothendieck topos such that there is an equivalence between models of the theory in any other $\kappa$-geometric Grothendieck topos $\mathcal{E}$ and geometric morphisms $\mathcal{E} \to \mathcal{B}(\theory)$ whose inverse images preserve all $\kappa$-small limits.
\end{thm}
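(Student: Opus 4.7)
}

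The natural candidate is $\mathcal{B}(\theory) := \mathcal{S}h(\mathcal{C}_{\theory}, \tau)$, where $\mathcal{C}_{\theory}$ is the $\kappa$-geometric syntactic category of Proposition \ref{catcomp} and $\tau$ is the $\kappa$-geometric coverage. By Lemma \ref{shemb}, this is a $\kappa$-geometric Grothendieck topos, and the Yoneda embedding $y: \mathcal{C}_{\theory} \to \mathcal{B}(\theory)$ is a conservative $\kappa$-geometric functor. Composing $y$ with the tautological internal model of $\theory$ in $\mathcal{C}_{\theory}$ gives a model $M_{\theory}$ of $\theory$ in $\mathcal{B}(\theory)$, which will serve as the generic model.

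For the universal property, I would first dispose of the easy direction. Given a $\kappa$-geometric morphism $f : \mathcal{E} \to \mathcal{B}(\theory)$, the inverse image $f^*$ is, by definition, a $\kappa$-geometric functor, hence preserves the categorical interpretation of $\kappa$-geometric formulas; therefore $f^* M_{\theory}$ is a model of $\theory$ in $\mathcal{E}$. The assignment $f \mapsto f^* M_{\theory}$ defines one half of the equivalence, and its functoriality in $f$ via natural transformations of inverse images is routine.

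The substantive direction is to show that every model $M$ of $\theory$ in a $\kappa$-geometric Grothendieck topos $\mathcal{E}$ arises (up to canonical isomorphism) this way. By categorical semantics, $M$ corresponds to a $\kappa$-geometric functor $F_M : \mathcal{C}_{\theory} \to \mathcal{E}$; because $F_M$ preserves $\kappa$-small unions of subobjects and covers a cover by a jointly covering family of images, it is $\tau$-continuous. The plan is then to invoke a $\kappa$-geometric analogue of Diaconescu's theorem: one forms the left Kan extension $F_M^* := \operatorname{Lan}_y F_M : \mathcal{B}(\theory) \to \mathcal{E}$ by the usual coend/colimit formula, verifies that $\tau$-continuity makes this descend from presheaves to sheaves, and shows it has a right adjoint $F_{M*}$ (the "nerve" $X \mapsto \operatorname{Hom}_{\mathcal{E}}(F_M(-), X)$ followed by sheafification). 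One then checks that $F_M^* \circ y \cong F_M$ and that this construction is inverse (up to isomorphism) to the assignment $f \mapsto f^* M_{\theory}$.

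The main obstacle is verifying that $F_M^*$ preserves $\kappa$-small limits, i.e.\ that the resulting geometric morphism is $\kappa$-geometric. Since $F_M^*$ is computed as a colimit over the comma category $(y \downarrow X)$, the strategy is to show that $\kappa$-geometricity of $F_M$ makes this comma category suitably $\kappa$-filtered (a $\kappa$-flatness statement extending the classical equivalence "flat $\Leftrightarrow$ filtered colimit of representables"), and then to use that $\kappa$-filtered colimits commute with $\kappa$-small limits in $\mathcal{E}$. This last commutation is exactly where the hypothesis that $\mathcal{E}$ has property $T$ is needed: property $T$ guarantees that transfinite composites of covering families remain covering, which is the sheaf-theoretic shadow of the required $\kappa$-filtered/$\kappa$-limit commutation, and parallels the role transfinite transitivity played in Lemma \ref{shemb}. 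Once this is in place, uniqueness of $F_M^*$ follows because every object of $\mathcal{B}(\theory)$ is a $\kappa$-small colimit of representables quotiented by $\tau$-covers, and $F_M^*$ is forced on representables by the requirement $F_M^* \circ y \cong F_M$.
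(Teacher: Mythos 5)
Your proposal follows essentially the same route as the paper: both take $\mathcal{B}(\theory)=\mathcal{S}h(\mathcal{C}_{\theory},\tau)$, reduce the substantive direction to a Diaconescu-style left Kan extension along Yoneda, and correctly locate the real work in showing the inverse image preserves $\kappa$-small limits via (i) $\kappa$-filteredness of the relevant comma categories, coming from $F_M$ preserving $\kappa$-small limits, and (ii) property $T$ of $\mathcal{E}$, which the paper uses precisely to show the poset of covering sieves is $\kappa$-filtered so that sheafification preserves $\kappa$-small limits. The only presentational difference is that the paper factors the inverse image through a small $\kappa$-limit-closed dense subcategory $\mathcal{D}\subseteq\mathcal{E}$ with $\mathcal{E}\simeq\mathcal{S}h(\mathcal{D},\rho)$, but this is the same computation packaged differently.
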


\begin{proof}
 We shall show that the topos of sheaves on the syntactic category $\mathcal{C}_{\theory}$ with the $\kappa$-geometric coverage $\tau$ is the $\kappa$-classifying topos of \theory\ . Note that such a topos is $\kappa$-geometric by Lemma \ref{shemb}. We know (see e.g. \cite{johnstone}) that models of the theory in $\mathcal{E}$, i.e., $\kappa$-geometric functors $F: \mathcal{C}_{\theory} \to \mathcal{E}$ are in particular geometric functors and hence they induce a corresponding geometric morphism with direct image $F^*: \mathcal{E} \to \mathcal{B}(\theory)$. We just need to show that, using that $F$ preserves all $\kappa$-small limits and $\mathcal{E}$ has property $T$, the inverse image corresponding to $F^*$ also preserves $\kappa$-small limits. Now if we consider a small subcategory $\mathcal{D}$ of $\mathcal{E}$ closed under $\kappa$-limits, containing a set of generators and the image of $F$, then the corestriction $F: \mathcal{C}_{\theory} \to \mathcal{D}$ can be made into a morphism of sites that preserve $\kappa$-small limits by equipping $\mathcal{D}$ with the topology $\rho$ induced by $\mathcal{E}$, in such a way that $\mathcal{E}=\mathcal{S}h(\mathcal{D}, \rho)$. Therefore, the inverse image corresponding to $F^*$ (its left adjoint), is given by the following composition:
 
$$\mathcal{S}h(\mathcal{C}_{\theory}, \tau) \stackrel{i}{\to} \mathcal{S}et^{\mathcal{C}_{\theory}^{op}} \stackrel{\lim_{F}}{\to} \mathcal{S}et^{\mathcal{D}^{op}} \stackrel{a}{\to} \mathcal{S}h(\mathcal{D}, \tau)$$
\\
where $i$ is the inclusion, $\lim_{F}$ is the left Kan extension of $F$ and $a$ is the associated sheaf functor. Now $i$ being a right adjoint, it preserves all limits; while $\lim_{F}$ preserves all $\kappa$-small limits because these commute with $\kappa$-filtered colimits; more precisely, each $\lim_{F}(-)(D): \mathcal{S}et^{\mathcal{C}_{\theory}^{op}} \to \mathcal{S}et$ preserves $\kappa$-small limits because it is the composition:

$$\mathcal{S}et^{\mathcal{C}_{\theory}^{op}} \stackrel{U^*}{\to} \mathcal{S}et^{(D \downarrow F)^{op}} \stackrel{\lim}{\to} \mathcal{S}et$$
\\
where $U: (D \downarrow F) \to \mathcal{C}_{\theory}$ is the forgetful functor from the comma category. Then $U^*$ preserves all limits, since it has a left adjoint, and $\lim$ preserves $\kappa$-small limits because $(D \downarrow F)^{op}$ is $\kappa$-filtered (which is a consequence of $\mathcal{C}_{\theory}$ having and $F$ preserving $\kappa$-small limits).

Finally, to see that $a$ preserves $\kappa$-small limits note that it is defined with two steps of the plus construction, which is in turn a colimit of sets of matching families over covering sieves ordered by reverse inclusion. Now the topology $\rho$ is generated by a basis consisting of jointly epic families of arrows, and the fact that $\mathcal{E}$ has the exactness property $T$ implies that the transfinite composites (up to $\kappa$) of jointly epic families is jointly epic. In particular, for any set $S$ of $\gamma<\kappa$ basic covering families $(f_{ji_j}: E_{ji_j} \to E)_{i_j<\delta, j<\gamma}$, the family $(h_{g}: P_g \to E_{j g(j)} \to E)_{g \in \delta^{\gamma}}$ (where each $P_g$ is the generalized pullback of the arrows $\{E_{j g(j)} \to E\}_{j < \gamma}$) is jointly covering, being the transfinite composite (up to $\kappa$) of basic covering families, and it factors through every family in $S$. It follows that the set of covering sieves ordered by reverse inclusion is $\kappa$-filtered, and the plus construction preserves, therefore, $\kappa$-small limits. This finishes the proof.
 
\end{proof}

We deduce now the following:

\begin{cor}\label{kp}
 If $\kappa^{<\kappa}=\kappa$, every $\kappa$-separable topos has enough $\kappa$-points, that is, points whose inverse images preserve $\kappa$-small limits.
\end{cor}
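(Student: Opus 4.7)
The plan is to realize any $\kappa$-separable topos $\mathcal{E}$ as the $\kappa$-classifying topos $\mathcal{B}(\theory)$ of some $\kappa$-geometric theory $\theory$ axiomatized by at most $\kappa$ sequents, and then to invoke the completeness Theorem \ref{kgcomp} to produce enough $\mathbf{Set}$-valued models. Since $\mathbf{Set}$ itself has property $T$, the universal property of $\mathcal{B}(\theory)$ provided by Theorem \ref{cltop} identifies such $\mathbf{Set}$-models with $\kappa$-points of $\mathcal{E}$, so that completeness of $\theory$ translates directly into existence of enough $\kappa$-points.

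First I would fix a site $(\mathcal{C}, \tau)$ witnessing $\kappa$-separability, i.e.\ with $\mathcal{E} \simeq \mathcal{S}h(\mathcal{C}, \tau)$, $\mathcal{C}$ of size at most $\kappa$ and having $\kappa$-small limits, and $\tau$ generated by at most $\kappa$ covering families. I would then form $\theory$ as the internal $\kappa$-geometric theory of $\mathcal{C}$, as described after Lemma \ref{soundness}: one sort per object, one function symbol per morphism, together with the identity/composition axioms, the axioms expressing the chosen $\kappa$-small limit cones, and the axioms asserting that each of the chosen generating families is jointly covering. Because $\mathcal{C}$ has at most $\kappa$ morphisms, at most $\kappa^{<\kappa} = \kappa$ many $\kappa$-small limit diagrams, and $\tau$ has at most $\kappa$ generators, the theory $\theory$ has at most $\kappa$ axioms. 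A direct verification should show that $\mathcal{C}_{\theory}$ is equivalent, as a $\kappa$-geometric category, to $\mathcal{C}$, and that the $\kappa$-geometric coverage on $\mathcal{C}_{\theory}$ matches $\tau$ under this equivalence; this uses property $T$ of $\mathcal{E}$ to guarantee that every jointly epic family arises as a transfinite composite of basic covers of $\tau$. By Theorem \ref{cltop} one then obtains $\mathcal{E} \simeq \mathcal{B}(\theory)$.

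Next, Theorem \ref{kgcomp} applies, since $\theory$ has at most $\kappa$ axioms and $\kappa^{<\kappa} = \kappa$, so $\theory$ is complete with respect to $\mathbf{Set}$-valued models; by Theorem \ref{cltop} these correspond bijectively to $\kappa$-points of $\mathcal{E}$. To finish I would use the standard fact that a Grothendieck topos has enough points of a prescribed kind as soon as the corresponding inverse image functors jointly distinguish proper subobject inclusions among representables. If $\sem{\phi} \subsetneq \sem{\psi}$ is such a strict inclusion in $\mathcal{E}$, the conservativity part of Lemma \ref{shemb} gives that the sequent $\psi \vdash_{\mathbf{x}} \phi$ is not derivable in $\theory$; completeness then yields a $\mathbf{Set}$-model $M$, i.e.\ a $\kappa$-point $p$, satisfying $p^{*}(\sem{\psi}) \nsubseteq p^{*}(\sem{\phi})$, which is what is required.

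The main obstacle will be the first step, namely the explicit identification $\mathcal{E} \simeq \mathcal{B}(\theory)$. One must check that the chosen axiomatization of the internal theory really fits into $\kappa$ sequents in canonical form and that the syntactically generated $\kappa$-geometric topology on $\mathcal{C}_{\theory}$ recovers the original topology $\tau$. This is precisely where property $T$ is indispensable: it is what allows the basis of $\tau$ to be closed under the transfinite composites appearing in rule $T$ without growing beyond cardinality $\kappa$, so that the syntactic and semantic coverages coincide and the completeness machinery can be brought to bear.
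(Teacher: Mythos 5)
Your proposal follows essentially the same route as the paper: realize the $\kappa$-separable topos as the $\kappa$-classifying topos of the internal theory of its site (the paper phrases this as the theory of continuous $\kappa$-small-limit-preserving functors from $\mathcal{C}$, which has at most $\kappa$ axioms by $\kappa$-separability), apply Theorem \ref{kgcomp} to obtain a jointly conservative family of $\mathbf{Set}$-valued models, and transfer conservativity from the representables to all objects of the topos (the paper does this by noting that the class of objects on which properness of subobjects is jointly preserved contains the image of the syntactic category and is closed under coproducts and quotients). The only caveat is your intermediate claim that $\mathcal{C}_{\theory}$ is equivalent to $\mathcal{C}$ as a $\kappa$-geometric category, which is not literally true ($\mathcal{C}$ need not be $\kappa$-geometric and the syntactic category is strictly larger); the paper avoids this detour by verifying the universal property of $\mathcal{B}(\theory)$ directly against geometric morphisms into $\Sh{\mathcal{C},\tau}$, which is all that is actually needed.
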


\begin{proof}
 First note that every $\kappa$-separable topos is the $\kappa$-classifying topos of a $\kappa$-geometric theory with at most $\kappa$ many axioms. Indeed, it is enough to take the $\kappa$-geometric theory of continuous functors from the underlying category $\mathcal{C}$ of the site that preserve $\kappa$-small limits. Models of this theory in a $\kappa$-geometric topos are precisely continuous functors from $\mathcal{C}$ preserving $\kappa$-small limits, which correspond, by the proof of Theorem \ref{cltop}, to geometric morphisms from the $\kappa$-separable topos whose inverse image preserve $\kappa$-small limits (which is precisely the universal property of the $\kappa$-classifying topos). Because of the $\kappa$-separability, such a theory has at most $\kappa$ many axioms, and therefore it is complete with respect to $\mathcal{S}et$-valued models, by Theorem \ref{kgcomp}. Now any jointly conservative set of $\mathcal{S}et$-valued models of the theory correspond to $\kappa$-points of the $\kappa$-separable topos. To see that these $\kappa$-points are jointly conservative, note that the class of objects $A$ of the $\kappa$-classifying topos such that the inverse images of the $\kappa$-points jointly preserve properness of subobjects contains the objects coming from the syntactic category of the theory and it is closed under coproducts and quotients, so it contains all the objects of the $\kappa$-classifying topos.
\end{proof}

\begin{rmk}
This version of Deligne's theorem for $\kappa$-separable topos can be considered, via Morleyization, as a completeness theorem for $\mathcal{L}_{\kappa^+, \kappa}(T)$, the classical system where we add the axiom scheme given by the rule $T$. The details are in \cite{espindola}.
\end{rmk}

\subsection{Alternative construction of $\mathcal{B}(\theory)$}

We will now prove that if $\kappa^{<\kappa}=\kappa$ (which is a consequence of the Generalized Continuum Hypothesis for every regular $\kappa$), the $\kappa$-classifying topos of a $\kappa$-geometric theory axiomatized by at most $\kappa$-many axioms in canonical form is a $\kappa$-separable topos. We will do this by constructing an alternative site through an infinitary generalization of Coste's version of the classifying topos explained e.g. in \cite{mr} for languages without relation symbols. First, note that every $\kappa$-geometric formula can be put in the canonical form $\bigvee_{i<\delta} \exists \mathbf{x_i}\bigwedge_{j<\gamma}\phi_{ij}$, where $\gamma<\kappa$ and each $\mathbf{x_i}$ has less than $\kappa$ many variables, and where the $\phi_{ij}$ are atomic formulas. This in turn is possible through the use of the axiom of choice and the distributivity axiom $\bigwedge_{i<\gamma} \bigvee_{j<\delta} \phi_{ij} \vdash_{\mathbf{x}}  \bigvee_{f \in \delta^{\gamma}} \bigwedge_{i \in \gamma} \phi_{if(i)}$, both derivable from the rule $T$ (similar derivations are available in \cite{espindola}). A $\kappa$-geometric sequent is in canonical form if it has the form $\bigwedge_{k<\alpha} \psi_k \vdash_{\mathbf{x}} \bigvee_{i<\delta} \exists \mathbf{x_i}\bigwedge_{j<\gamma}\phi_{ij}$ with $\psi_k, \phi_{ij}$ atomic. Every $\kappa$-geometric theory is equivalent to a theory axiomatized in canonical form. 

Note also that by the second of the two methods explained in \cite{johnstone}, D 1.4.9, it is possible to find, for every $\kappa$-geometric theory \theory\ over a signature $\Sigma$ with at most $\kappa$ many axioms in canonical form, a Morita-equivalent\footnote{Two $\kappa$-geometric theories are Morita-equivalent if their categories of models in every $\kappa$-geometric Grothendieck topos are equivalent} theory $\mathbb{T}'$ in a signature $\Sigma'$ with only function symbols and having also $\kappa$ many axioms. For this latter type of theories one can build their $\kappa$-classifying topos as follows. 

The underlying category of the site $\mathcal{C}$ has as objects sets $\Phi(x_0, ..., x_{\alpha}, ...)$ of less than $\kappa$-many atomic formulas (equalities between terms), while a morphism from $\Phi(x_0, ..., x_{\alpha}, ...)$ to $\Psi(x_0, ..., x_{\beta}, ...)$ between two such sets consists of an equivalence class of $\gamma$-tuples of terms $(t_0, ..., t_{\beta}, ...)$ of the same type as the free variables in $\Psi$ and with free variables among those of $\Phi$, such that the sequent $\bigwedge \Phi \vdash_{\{x_0, ..., x_{\alpha}, ...\}} \bigwedge \Psi(t_0, ..., t_{\beta}, ...)$ is provable, and where two tuples $(t_i)_{i<\gamma}$ and $(s_i)_{i<\gamma}$ are equivalent if the sequent $\bigwedge \Phi \vdash_{\{x_0, ..., x_{\alpha}, ...\}} \bigwedge_{i<\gamma}t_i=s_i$ is provable. Composition is given by substitution. This category is the dual of the full subcategory $\mathcal{P}$ of $\kappa$-presentable algebras on the signature of the theory (see \cite{mr}), through the assignment that sends an object $\Phi$ of $\mathcal{C}$ to the algebra in the generators given by the free variables of $\Phi$ and the equations in $\Phi$. In particular, $\mathcal{C}$ has all $\kappa$-small limits.

There is a Grothendieck topology associated to the axiomatization. More precisely, to each axiom $\bigwedge \Phi \vdash_{\mathbf{x}} \bigvee_{i<\delta} \exists \mathbf{x_i}\bigwedge\Psi_{i}$ we define a covering family to be the obvious set of morphisms $\{\Phi \cup \Psi_i \to \Phi\}_{i<\delta}$. Then we define the $\kappa$-Grothendieck topology $\rho$ generated by these set of covers.

There is a $\Sigma'$-structure in $\mathcal{C}=\mathcal{P}^{op}$ defined as follows. To each sort $S$ we assign the free algebra $F[x]$ on one generator $x: S$, while to functions $f: S_0 \times ...S_{\alpha} \times ... \to S$ we assign the unique morphism $t: F[x] \to F[x_0, ..., x_{\alpha}, ...]$ such that $t(x)=f(x_0, ..., x_{\alpha}, ...)$. In a similar way as explained in \cite{mr}, we can prove that the topos of sheaves on $\mathcal{C}$ satisfies the universal property of the $\kappa$-classifying topos of the theory with respect to models in $\mathcal{S}et$. We claim now the following:

\begin{thm}
$\mathcal{S}h(\mathcal{C}, \rho)$ is the $\kappa$-classifying topos of \theory\ . 
\end{thm}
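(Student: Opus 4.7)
The plan is to verify that $\mathcal{S}h(\mathcal{C}, \rho)$ satisfies the universal property of the $\kappa$-classifying topos, paralleling the proof of Theorem \ref{cltop} but now for the Morita-equivalent theory $\mathbb{T}'$ on the Coste-style site. By Diaconescu's theorem applied to the $\kappa$-site $(\mathcal{C}, \rho)$, geometric morphisms $f: \mathcal{E} \to \mathcal{S}h(\mathcal{C}, \rho)$ correspond bijectively to flat functors $F: \mathcal{C} \to \mathcal{E}$ that send every $\rho$-covering family to a jointly epimorphic family. Since $\mathcal{C}$ has all $\kappa$-small limits, I would focus on functors $F$ preserving $\kappa$-small limits, which are automatically flat, and I will eventually identify these as exactly those yielding $\kappa$-geometric morphisms.

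Next I would identify such functors with $\mathbb{T}'$-models in $\mathcal{E}$. Recall that $\mathcal{C}^{op} = \mathcal{P}$ is the category of $\kappa$-presentable $\Sigma'$-algebras, and every $\Sigma'$-algebra in $\mathcal{E}$ is a $\kappa$-filtered colimit of $\kappa$-presentable ones. Hence $\kappa$-continuous $F: \mathcal{C} \to \mathcal{E}$ are in natural bijection with $\Sigma'$-structures in $\mathcal{E}$: the correspondence assigns to $F$ the structure obtained by composing the generic $\Sigma'$-structure in $\mathcal{C}$ (described just above the theorem) with $F$. Under this bijection, the axiom $\bigwedge \Phi \vdash_{\mathbf{x}} \bigvee_{i<\delta} \exists \mathbf{x_i}\bigwedge \Psi_i$ holds in the structure associated to $F$ precisely when the covering family $\{\Phi \cup \Psi_i \to \Phi\}_{i<\delta}$ is sent to a jointly epimorphic family in $\mathcal{E}$, because each arrow $F(\Phi \cup \Psi_i) \to F(\Phi)$ interprets the canonical subobject associated to $\exists \mathbf{x_i}(\bigwedge \Phi \wedge \bigwedge \Psi_i)$. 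Thus $\kappa$-continuous cover-preserving functors $\mathcal{C} \to \mathcal{E}$ correspond to $\mathbb{T}'$-models, which by Morita-equivalence correspond to $\mathbb{T}$-models.

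It remains to show that the induced geometric morphism $f$ is $\kappa$-geometric, i.e.\ that its inverse image preserves $\kappa$-small limits. For this I would repeat essentially verbatim the closing argument of Theorem \ref{cltop}: enlarge $F(\mathcal{C})$ to a small subcategory $\mathcal{D} \subseteq \mathcal{E}$ closed under $\kappa$-limits and containing a set of generators, so that $\mathcal{E} \simeq \mathcal{S}h(\mathcal{D}, \sigma)$, and then factor the inverse image of $f$ as
$$\mathcal{S}h(\mathcal{C}, \rho) \stackrel{i}{\to} \mathcal{S}et^{\mathcal{C}^{op}} \stackrel{\lim_{F}}{\to} \mathcal{S}et^{\mathcal{D}^{op}} \stackrel{a}{\to} \mathcal{S}h(\mathcal{D}, \sigma),$$
where $i$ is right adjoint to sheafification and hence preserves all limits, $\lim_F$ preserves $\kappa$-small limits because each comma category $(D \downarrow F)^{op}$ is $\kappa$-filtered (using that $F$ preserves $\kappa$-small limits), and $a$ preserves $\kappa$-small limits because the exactness property $T$ on $\mathcal{E}$ forces the poset of $\sigma$-covering sieves on any object, ordered by reverse inclusion, to be $\kappa$-filtered.

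The main obstacle is this last step, preservation of $\kappa$-small limits by the sheafification $a$; however, this is exactly the phenomenon already established in the proof of Theorem \ref{cltop}, where property $T$ on the target topos ensures that the transfinite composite (up to $\kappa$) of basic $\sigma$-covering families is again $\sigma$-covering, and this verification transfers unchanged to the present setting. Conversely, a $\kappa$-geometric morphism $f: \mathcal{E} \to \mathcal{S}h(\mathcal{C}, \rho)$ restricts along the Yoneda embedding to a $\kappa$-continuous cover-preserving functor $\mathcal{C} \to \mathcal{E}$, which provides the inverse construction. This establishes the required equivalence between $\mathbb{T}$-models in $\mathcal{E}$ and $\kappa$-geometric morphisms $\mathcal{E} \to \mathcal{S}h(\mathcal{C}, \rho)$, proving the theorem.
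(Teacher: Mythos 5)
Your proposal is correct in outline but takes a genuinely different route from the paper's. The paper does \emph{not} verify the universal property directly against an arbitrary $\kappa$-geometric topos: it first records that $\mathcal{S}h(\mathcal{C},\rho)$ classifies $\mathbb{T}'$ with respect to $\mathcal{S}et$-models, and then, for a topos $\mathcal{E}$ with enough $\kappa$-points, conservatively embeds $\mathcal{E}$ into a power $\mathcal{S}et^I$ via those points, transports a $\mathbb{T}'$-model in $\mathcal{E}$ to a family of $\mathcal{S}et$-models, assembles the corresponding inverse images into a single $G:\mathcal{S}h(\mathcal{C},\rho)\to\mathcal{S}et^I$, and argues that $G$ factors through $\mathcal{E}$ because it is determined by its values on the sorts (every object of $\mathcal{C}=\mathcal{P}^{op}$ being a $\kappa$-small limit of free algebras $F[x]$ and every sheaf a colimit of representables); this restricted universal property suffices because all the toposes being compared have enough $\kappa$-points. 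You instead verify the universal property for every $\kappa$-geometric $\mathcal{E}$ at once via Diaconescu's theorem and the infinitary Coste/Gabriel--Ulmer correspondence between $\kappa$-continuous functors on $\mathcal{P}^{op}$ and $\Sigma'$-structures, then re-run the inverse-image factorization of Theorem \ref{cltop}; this is arguably closer to what the statement literally demands and avoids the detour through points, at the price of having to establish that correspondence over a general topos. On that point your justification is slightly off target: the claim that every $\Sigma'$-algebra in $\mathcal{E}$ is a $\kappa$-filtered colimit of $\kappa$-presentable ones is the $\mathcal{S}et$-level statement; what is actually needed (and what the paper itself uses) is that every object of $\mathcal{C}$ is a $\kappa$-small limit of the objects $F[x]$, so that a $\kappa$-continuous functor is determined by, and can be reconstructed from, a $\Sigma'$-structure. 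Two further steps deserve to be made explicit, though the paper glosses over them as well: (i) since $\rho$ is the $\kappa$-Grothendieck topology \emph{generated} by the axiom covers, hence closed under transfinite composition, passing from ``$F$ sends the generating covers to jointly epic families'' to ``$F$ is $\rho$-continuous'' already uses property $T$ of $\mathcal{E}$; and (ii) your converse direction needs $ay:\mathcal{C}\to\mathcal{S}h(\mathcal{C},\rho)$ to preserve $\kappa$-small limits, which requires the plus-construction argument of Theorem \ref{cltop} applied to $\rho$ itself.
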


\begin{proof}
 We prove that $\mathcal{S}h(\mathcal{C}, \rho)$ is equivalent to $\mathcal{S}h(\mathcal{C}_{\mathbb{T}'}, \tau)$ ($\tau$ being the $\kappa$-geometric coverage) by showing that $\mathcal{S}h(\mathcal{C}, \rho)$ has the universal property of the $\kappa$-classifying topos with respect to a $\kappa$-geometric topos with enough $\kappa$-points. Given such a topos $\mathcal{E}$, it is easy to prove that there is a conservative $\kappa$-geometric morphism with inverse image $E: \mathcal{E} \to \mathcal{S}et^{I}$ such that composition with the evaluation at $i \in I$, $ev(i)E$ gives a $\kappa$-point of $\mathcal{E}$. Now each model of $\mathbb{T}'$ in $\mathcal{E}$ give rise to models in $\mathcal{S}et$ by considering their images through each $ev(i)E$. These correspond to unique (up to isomorphism) $\kappa$-geometric morphisms with inverse image $\mathcal{S}h(\mathcal{C}, \rho) \to \mathcal{S}et$, which in turn induce a $\kappa$-geometric morphism with inverse image $G: \mathcal{S}h(\mathcal{C}, \rho) \to \mathcal{S}et^I$ and with the property that the composition $Gay: \mathcal{C} \to \mathcal{S}et^{\mathcal{C}^{op}} \to \mathcal{S}h(\mathcal{C}, \rho) \to \mathcal{S}et^I$ maps the product of sorts in the $\Sigma'$-structure in $\mathcal{C}$ into $\mathcal{E}$. Now $G$ preserves $\kappa$-small limits and colimits, and every object in $\mathcal{S}h(\mathcal{C}, \rho)$ is a colimit of objects of the form $ay(C)$, while every object $C$ in $\mathcal{C}=\mathcal{P}^{op}$, as a $\kappa$-presentable algebra, is a $\kappa$-small limit of objects corresponding to the free $\kappa$-presentable algebras $F[x]$ in $\mathcal{P}^{op}$, that correspond in turn to sorts in the $\Sigma'$-structure in $\mathcal{C}$. Therefore, since $ay$ also preserves $\kappa$-small limits, $G$ is completely determined (up to isomorphism) by its value on the objects $ay(C)$ for $C$ a sort in the in the $\Sigma'$-structure in $\mathcal{C}$. Since the value of $G$ on such objects belongs to $\mathcal{E}$, and $E$ preserves $\kappa$-small limits and colimits, it follows that $G$ itself factors through $\mathcal{E}$. Moreover, it is the unique (up to isomorphism) inverse image of a $\kappa$-geometric morphism corresponding to the given model in $\mathcal{E}$. This finishes the proof.
\end{proof}

We now immediately get:

\begin{cor}\label{sep}
 If $\kappa^{<\kappa}=\kappa$, then the $\kappa$-classifying topos of a $\kappa$-geometric theory of at most $\kappa$-many axioms in canonical form is $\kappa$-separable.
\end{cor}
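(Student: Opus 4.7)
The plan is to use the concrete description of the $\kappa$-classifying topos provided by the preceding theorem, namely as $\mathcal{S}h(\mathcal{C},\rho)$, and to verify directly that the site $(\mathcal{C},\rho)$ meets the four defining conditions of $\kappa$-separability listed in the introduction: (i) $\mathcal{C}$ has $\kappa$-small limits; (ii) $\mathcal{C}$ has at most $\kappa$ objects and morphisms; (iii) $\rho$ is generated by at most $\kappa$ many covering families; and (iv) the resulting topos satisfies the exactness property $T$.

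Conditions (i) and (iv) come essentially for free. The category $\mathcal{C}=\mathcal{P}^{op}$ was shown in the construction to have all $\kappa$-small limits, since $\mathcal{P}$ is a full subcategory of $\kappa$-presentable algebras closed under $\kappa$-small colimits. Property $T$ is automatic because $\mathcal{S}h(\mathcal{C},\rho)$ was identified with $\mathcal{S}h(\mathcal{C}_{\mathbb{T}'},\tau)$, which is $\kappa$-geometric by Lemma \ref{shemb}.

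The heart of the argument is the cardinality count in (ii) and (iii), which relies on the hypothesis $\kappa^{<\kappa}=\kappa$. First I would pass to a signature $\Sigma'$ of cardinality at most $\kappa$ by restricting to the reduct generated by symbols actually appearing in the $\leq\kappa$ axioms (together with the auxiliary symbols supplied by the Morita reduction of \cite{johnstone}, D.1.4.9). Once $|\Sigma'|\leq\kappa$, a routine inductive count gives at most $\kappa^{<\kappa}=\kappa$ terms over $\Sigma'$, hence at most $\kappa$ atomic formulas (equations of terms), hence at most $\kappa^{<\kappa}=\kappa$ objects of $\mathcal{C}$ (these being sets of fewer than $\kappa$ atomic formulas), and at most $\kappa$ morphisms (equivalence classes of tuples of length $<\kappa$ of terms). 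Finally, the topology $\rho$ is generated by exactly one basic covering family per axiom, giving at most $\kappa$ generating covers, which verifies (iii).

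The main technical obstacle I anticipate is ensuring that the Morita reduction from \cite{johnstone}, D.1.4.9, does not blow the cardinality of the signature past $\kappa$: it introduces new function symbols indexed by certain definable terms in the original axioms, and one must check that since each axiom in canonical form mentions fewer than $\kappa$ subformulas (again using $\kappa^{<\kappa}=\kappa$), the total number of new symbols produced is at most $\kappa\cdot\kappa=\kappa$. Once this bookkeeping is done the four conditions line up and $\mathcal{S}h(\mathcal{C},\rho)$ is $\kappa$-separable, proving the corollary.
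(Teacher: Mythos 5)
Your proposal is correct and follows essentially the same route as the paper, which states the corollary as an immediate consequence of the preceding construction: the explicit site $(\mathcal{C},\rho)$ built there has $\kappa$-small limits, at most $\kappa^{<\kappa}=\kappa$ objects and morphisms, a topology generated by one covering family per axiom, and property $T$ via the equivalence with $\mathcal{S}h(\mathcal{C}_{\mathbb{T}'},\tau)$. The cardinality bookkeeping for the Morita reduction that you flag is exactly the detail the paper leaves implicit, and your resolution of it is the intended one.
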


\subsection{$\kappa$-coherent toposes}

Let us now assume that $\kappa$ is a weakly compact cardinal. $\kappa$-coherent logic is the fragment of $\kappa$-geometric logic where disjunctions are indexed by ordinals less than $\kappa$. In this case, because every bar over the tree $\gamma^\kappa$ (for $\gamma<\kappa$) is uniform\footnote{A bar is uniform if it contains all nodes in a given level of the tree.} (a consequence of the weak compactness of $\kappa$), it is possible to replace rule $T$ with the transfinite transitivity property of \cite{espindola}. A $\kappa$-coherent topos is a topos that occurs as the $\kappa$-classifying topos of a $\kappa$-coherent theory of cardinality at most $\kappa$. Alternatively, it is a $\kappa$-geometric topos on a site of size at most $\kappa$ whose topology is generated by families of less than $\kappa$ many morphisms. By Corollary \ref{kp}, the $\kappa$-classifying topos of a $\kappa$-coherent theory of cardinality at most $\kappa$ has enough $\kappa$-points. Therefore we get:

\begin{thm}
 If $\kappa$ is weakly compact, every $\kappa$-coherent topos has enough $\kappa$-points.
\end{thm}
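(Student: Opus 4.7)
The plan is to chain together the three previous results. Since $\kappa$ is weakly compact, it is in particular inaccessible, so $\kappa^{<\kappa}=\kappa$, which unlocks both Corollary \ref{sep} and Corollary \ref{kp}. So the argument should proceed by fitting a $\kappa$-coherent topos into the hypothesis of Corollary \ref{kp} by way of Corollary \ref{sep}.

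First, I would unwind the definition: a $\kappa$-coherent topos is, by definition, the $\kappa$-classifying topos $\mathcal{B}(\theory)$ of some $\kappa$-coherent theory $\theory$ of cardinality at most $\kappa$. Since $\kappa$-coherent logic is a fragment of $\kappa$-geometric logic (the disjunctions being now indexed by ordinals less than $\kappa$), $\theory$ is in particular a $\kappa$-geometric theory with at most $\kappa$ many axioms. Next, I would observe that such a theory can be axiomatized in canonical form without increasing the number of axioms past $\kappa$: each axiom $\bigwedge_{k<\alpha}\psi_k \vdash_{\mathbf{x}} \phi$ can be rewritten by pushing $\phi$ into canonical form $\bigvee_{i<\delta}\exists \mathbf{x}_i \bigwedge_{j<\gamma}\phi_{ij}$ using the choice and distributivity axioms derivable from rule $T$, as already invoked at the beginning of the section on the alternative construction. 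Since $\alpha,\gamma,\delta<\kappa$ and there are at most $\kappa$ axioms to rewrite, and $\kappa$ is regular, the resulting axiomatization still has at most $\kappa$ axioms in canonical form.

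With $\theory$ now presented as a $\kappa$-geometric theory of at most $\kappa$-many axioms in canonical form, Corollary \ref{sep} applies (using $\kappa^{<\kappa}=\kappa$) to conclude that $\mathcal{B}(\theory)$ is $\kappa$-separable. Then Corollary \ref{kp} applies (again using $\kappa^{<\kappa}=\kappa$) to give that $\mathcal{B}(\theory)$ has enough $\kappa$-points, which is the desired conclusion.

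I do not anticipate a genuine obstacle here, since all the heavy lifting has been carried out earlier: the completeness theorem \ref{kgcomp}, the construction of the $\kappa$-classifying topos in Theorem \ref{cltop}, and the separability analysis of Corollary \ref{sep}. The one point that deserves a careful sentence is the role of weak compactness: it is used implicitly twice, first to justify replacing rule $T$ by the transfinite transitivity property of \cite{espindola} (so that one can legitimately speak of $\kappa$-coherent toposes as $\kappa$-classifying toposes of theories satisfying the relevant exactness condition), and second because it ensures $\kappa^{<\kappa}=\kappa$, which is the hypothesis powering both corollaries. Everything else is bookkeeping.
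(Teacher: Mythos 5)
Your proposal is correct and follows essentially the same route as the paper: the paper likewise unwinds the definition of a $\kappa$-coherent topos as the $\kappa$-classifying topos of a $\kappa$-coherent (hence $\kappa$-geometric) theory of cardinality at most $\kappa$ and then invokes Corollary \ref{sep} and Corollary \ref{kp}, with weak compactness supplying both the uniformity of bars (so rule $T$ reduces to transfinite transitivity) and the inaccessibility needed for $\kappa^{<\kappa}=\kappa$. Your extra remark on preserving the bound of $\kappa$ axioms when passing to canonical form is a detail the paper leaves implicit, but it is a correct and harmless addition.
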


In the same way Deligne's theorem can be considered as G\"odel's completeness theorem for $\mathcal{L}_{\omega, \omega}$, so this infinitary version can be considered as Karp's completeness for $\mathcal{L}_{\kappa, \kappa}$. To see this, note that we have:

\begin{cor}\label{cohcomp}
 $\kappa$-coherent theories of cardinality at most $\kappa$ are complete with respect to $\mathcal{S}et$-valued models.
\end{cor}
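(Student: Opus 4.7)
The plan is to derive this completeness result directly from the preceding theorem, which states that every $\kappa$-coherent topos has enough $\kappa$-points. Given a $\kappa$-coherent theory $\theory$ of cardinality at most $\kappa$, I would form its $\kappa$-classifying topos $\mathcal{B}(\theory)$, which exists by Theorem~\ref{cltop} and, being $\kappa$-coherent by construction, admits a jointly conservative family of $\kappa$-points $\{p_i : \mathcal{S}et \to \mathcal{B}(\theory)\}_{i \in I}$.

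Next I would invoke the universal property of the $\kappa$-classifying topos: since $\mathcal{S}et$ is itself a $\kappa$-geometric Grothendieck topos (property $T$ holds trivially there, as noted just after its definition), $\kappa$-geometric morphisms $\mathcal{S}et \to \mathcal{B}(\theory)$ correspond up to isomorphism to $\mathcal{S}et$-valued models of $\theory$; hence each $p_i$ determines a model $M_i$. Since each $p_i^*$ is $\kappa$-geometric, it preserves $\kappa$-small limits and all colimits, so the interpretation of every $\kappa$-coherent formula in $M_i$ is the image under $p_i^*$ of its interpretation in the generic model $U$ inside $\mathcal{B}(\theory)$.

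Assume now that a sequent $\phi \vdash_{\mathbf{x}} \psi$ is valid in every $M_i$, so that $p_i^*(\sem{\phi}_U) \subseteq p_i^*(\sem{\psi}_U)$ for all $i$. Applying joint conservativity to the monomorphism $\sem{\phi \wedge \psi}_U \hookrightarrow \sem{\phi}_U$ forces it to be an isomorphism, whence $\sem{\phi}_U \subseteq \sem{\psi}_U$ in $\mathcal{B}(\theory)$. By the conservativity of the sheaf embedding $\mathcal{C}_{\theory} \to \mathcal{B}(\theory)$ guaranteed by Lemma~\ref{shemb} together with Proposition~\ref{catcomp}, this yields provability of the sequent in $\theory$. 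The only real step to be checked carefully is the translation between $\kappa$-points and $\mathcal{S}et$-valued models and the verification that inverse images of $\kappa$-geometric morphisms correctly interpret every $\kappa$-coherent clause; once that is in place, the deduction from the preceding theorem is automatic.
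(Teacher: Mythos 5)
Your proposal is correct and follows essentially the same route as the paper: both identify the $\kappa$-classifying topos of $\theory$ as the sheaf topos on the syntactic category, invoke the preceding theorem (via Corollary~\ref{kp}) to obtain a jointly conservative family of $\kappa$-points, and transfer conservativity back to provability through the $\kappa$-geometric embedding of $\mathcal{C}_{\theory}$. The paper states this more tersely (composing the $\kappa$-points with the Yoneda embedding to get a jointly conservative family of models), while you spell out the subobject argument explicitly, but the content is the same.
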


\begin{proof}
 Construct the $\kappa$-coherent syntactic category $\mathcal{C}_{\theory}$ of the $\kappa$-coherent theory and equip it with the Grothendieck topology $\tau$ whose basis consists of jointly epic families of less than $\kappa$ many morphisms. By the proof of Theorem \ref{cltop}, it follows that $\mathcal{S}h(\mathcal{C}_{\theory}, \tau)$ is the $\kappa$-classifying topos of the theory. By composing its $\kappa$-points with the embedding $y: \mathcal{C}_{\theory} \to \mathcal{S}h(\mathcal{C}_{\theory}, \tau)$ we get a jointly conservative family of models.
\end{proof}

As shown in \cite{espindola}, the transfinite transitivity rule is, in the Boolean case, equivalent to the addition of the axioms of distributivity and dependent choice from \cite{karp}, and so Theorem \ref{cohcomp} is essentially Karp's completeness theorem for $\mathcal{L}_{\kappa, \kappa}$.

As a final remark, we mention that in case $\kappa$ is strongly compact, the restriction on the cardinality of the $\kappa$-coherent theory can be removed. In this case, a $\kappa$-coherent topos is a $\kappa$-geometric topos on a site of arbitrary size whose topology is generated by families of less than $\kappa$ morphisms. Using the completeness of $\kappa$-coherent theories of arbitrary size (see \cite{espindola}) for this case, it follows that any topos that is $\kappa$-coherent in this sense has enough $\kappa$-points.

\bibliographystyle{amsalpha}

\renewcommand{\bibname}{References} 

\bibliography{references}



\end{document}